\crefname{hypothesis}{Hypothesis}{Hypotheses}
\title{Fourth-order Adaptive Mesh Refinement
  both in space and in time
  for incompressible Navier-Stokes equations 
  with Dirichlet boundary conditions\thanks
  {
  }
}
\author{Shubo Zhao\thanks
  {
    College of Mathematics and System Sciences,
      Xinjiang University,
      Urumqi, Xinjiang, 830046, China.
    Current address:
      School of Mathematical Sciences,
      Zhejiang University,
      Hangzhou, Zhejiang, 310058, China
      (shubomath@zju.edu.cn).
    This author's work
       was supported by
       the Graduate Student Research Innovation Program (XJ2021G018)
       of the Xinjiang province of China
  }
  \and
  Qinghai Zhang
  \thanks
  { Corresponding author. 
    School of Mathematical Sciences,
      Zhejiang University,
      Hangzhou, Zhejiang, 310058, China
      (\email{qinghai@zju.edu.cn}),
      and College of Mathematics and System Sciences,
      Xinjiang University,
      Urumqi, Xinjiang, 830046, China.
    This author's work
      was supported by
      the Ministry of Science and Technology
      of the People's Republic of China.
  }
}
\newcommand{\Dim}{{\scriptsize \textup{D}}}
\newcommand{\ebold}{{\mathbf{e}}}
\newcommand{\ibold}{{\mathbf{i}}}
\newcommand{\jbold}{{\mathbf{j}}}
\newcommand{\iphed}{{\ibold+\frac{1}{2}\ebold^d}}
\newcommand{\iphen}{{\ibold+\frac{1}{2}\ebold^n}}
\newcommand{\ipfed}[1]{\ibold+\frac{#1}{2}\ebold^d}
\newcommand{\imhed}{{\ibold-\frac{1}{2}\ebold^d}}
\newcommand{\imfed}[1]{\ibold-\frac{#1}{2}\ebold^d}
\newcommand{\ipmfed}[1]{\ibold\pm\frac{#1}{2}\ebold^d}
\newcommand{\nbold}{{\mathbf{n}}}
\newcommand{\ubold}{{\mathbf{u}}}
\newcommand{\xbold}{{\mathbf{x}}}
\newcommand{\wbold}{{\mathbf{w}}}
\newcommand{\avg}[1]{\left\langle #1 \right\rangle}
\newcommand{\avgI}[1]{\langle #1 \rangle_{\ibold}}
\newcommand{\nStages}{n_{\tiny \textsf{s}}}
\newcommand{\dt}{k}
\newcommand{\dif}{\,\mathrm{d}}
\newcommand{\Div}{{\mathbf{D}}}
\newcommand{\Grad}{{\mathbf{G}}}
\newcommand{\Iden}{{\mathbf{I}}}
\newcommand{\Lapl}{{\mathbf{L}}}
\renewcommand{\div}{\nabla \cdot}
\newcommand{\grad}{\nabla}
\newcommand{\lapl}{\Delta}
\newcommand{\exOp}{{\mathbf{X}}^{[\text{E}]}}
\newcommand{\Proj}{{\mathbf{P}}}
\newcommand{\cProjLH}{{\mathscr{P}}}
\newcommand{\dom}{\mathcal{D}}
\newcommand{\comp}{\textrm{comp}}
\newcommand{\valid}{\textrm{valid}}
\newcommand{\invalid}{\textrm{invld}}
\begin{document}

\maketitle

\begin{abstract}
  We present a fourth-order projection method
 with adaptive mesh refinement (AMR)
 for numerically solving the incompressible Navier-Stokes equations (INSE)
 with subcycling in time. 
 Our method features
 (i) a reformulation of INSE 
 so that the velocity divergence decays exponentially
 on the coarsest level, 
 (ii) a derivation of coarse-fine interface conditions
 that preserves the decay of velocity divergence
 on any refinement level of the AMR hierarchy,
 (iii) an approximation of the coarse-fine interface conditions
 via spatiotemporal interpolations
 to facilitate subcycling in time, 
 (iv) enforcing to machine precision
 solvability conditions of elliptic equations
 over each connected component of any refinement level, 
 (v) a discrete composite projection
 for synchronizing multiple levels,
 and (vi) geometric multigrid algorithms
 for solving linear systems with optimal complexity. 
Different from current block-structured AMR,
 our method never applies the fine-to-coarse averaging
 to projected velocities.
Results of numerical tests demonstrate
 the high accuracy and efficiency of the proposed method.

 


\end{abstract}

\begin{keywords}
  Incompressible Navier-Stokes equations (INSE),
  Adaptive mesh refinement (AMR) with subcycling in time,
  Projection methods,
  Finite volume methods,
  GePUP-E formulation of INSE
\end{keywords}

\begin{MSCcodes}
  76D05, 
  65M50, 
  76M12 
\end{MSCcodes}

\section{Introduction}
\label{sec:introduction}

Incompressible Navier-Stokes equations (INSE)
 with Dirichlet boundary conditions
 have the dimensionless form
 \begin{subequations}
   \label{eq:INS}
   \begin{align}
     \frac{\partial \ubold}{\partial t}
     + \ubold\cdot\nabla\ubold
     &= \mathbf{g} -\nabla p +  \nu\Delta \ubold
       \quad \text{ in } \dom,
     \\
     \nabla \cdot \ubold &= 0  \qquad\qquad\qquad\quad
                               \text{ in } \dom,
     \\
     \ubold &= \mathbf{u}^{b} \qquad\qquad\qquad
                  \text{ on } \partial\dom,
   \end{align}
 \end{subequations}
 where $t$ is time, 
 $\dom$ a bounded open region in $\mathbb{R}^{\Dim}$,
 $\Dim$ is the dimensionality of the domain, 
 $\partial\dom$ the domain boundary, 
 $\mathbf{g}$ the external force,
 $p$ the pressure,
 $\ubold$ the velocity,
 Re the Reynolds number,
 $\nu:=\frac{1}{\text{Re}}$, 
 and $\mathbf{u}^b$ the Dirichlet boundary condition satisfying
 \begin{equation}
   \label{eq:INSE-solvabilityC}
   \begin{array}{l}
   \int_{\partial\dom}\nbold\cdot\mathbf{u}^b \dif S = 0, 
   \end{array}
 \end{equation}
 which follows from (\ref{eq:INS}b), (\ref{eq:INS}c),
 and the divergence theorem applied to $\dom$.

The INSE in \eqref{eq:INS}
 describe a wide spectrum of real-world phenomena
 such as blood circulation, airflows, and ocean currents.
Numerical simulation of the INSE
 often requires high resolution to capture
 the complex, multiple-scale flow characteristics,
 particularly for medium to high Reynolds numbers.
However,
 uniformly fine grids across the entire domain are
 neither efficient nor necessary,
 as interesting flow features are often localized.
Adaptive mesh refinement (AMR) offers a solution
 to balance efficiency and accuracy
 by refining only local grids of interest.

An AMR method is said to be \emph{synchronized}
 if the local refinement only happens in space
 and all levels share the same time step size;
 see, e.g., those in 
 \mbox{\cite{minion1996projection,howell1997adaptive,zhang2012fourthAMR-ADE,van2022fourth}}.
Instead of a uniform discretization of the entire domain, 
 the spatial grid in the synchronized AMR
 varies in sizes according to the importance of local regions, 
 improving the efficiency
 by reducing the number of spatial unknowns.

An AMR method is \emph{subcycled}
 or have \emph{subcycling in time}
 if local refinements happen not only in space
 but also in time.
Since the time step size of a coarse level 
 is greater than that of the finest level, 
 subcycled AMR is a further improvement
 of synchronized AMR in terms of efficiency.
Osher and Sanders \cite{osher1983numerical}
 proposed a subcycled AMR of the predictor-corrector type 
 to solve the one-dimensional (1D) scalar hyperbolic conservation laws. 
Based on the forward Euler scheme, 
 their method is first-order accurate both in space and in time.
Coquel et al. \cite{coquel2010LTS_IMEX_for_hyperbolic}
 adapted this idea to develop a subcycled AMR
 for 1D hyperbolic systems with moving singularities. 
For hyperbolic equations in two dimensions (2D), 
 Berger and Oliger \cite{berger1984adaptive}
 developed the block-structure AMR 
 where the subcycling consists of
 (i) advancing a coarse level for a single time step, 
 (ii) marching finer levels for multiple time steps
 with boundary conditions approximated from results on the coarse level,
 and (iii) synchronizing the coarse and its finer levels
 once they reach the same time.
This subcycled AMR was applied to gas dynamics
 by Berger and Collella \cite{Berger1989Localadaptivemesh}
 and extended to three dimensions (3D) by
 Bell et al. \cite{Bell1994ThreeDimensionalAdaptiveMesh}.
Almgren et al. \cite{Almgren1998ConservativeAdaptiveProjection}
 proposed a second-order adaptive projection method
 for solving INSE in 2D, 
 which was later extended to 3D by Martin et al.
 \cite{Martin2008cellcenteredadaptiveprojection}.
Results in these works
 demonstrate the superior efficiency of subcycled AMR.
As for accuracy, 
 however, these methods are limited to the second order.
The superiority of subcycled AMR has also been well demonstrated
 in other fields.
Grote et al. \cite{Grote2015RK_ExpLTS_for_wave} developed 
 a series of explicit Runge-Kutta methods with local time-stepping
 for time-dependent simulations of wave propagation,
 effectively overcoming the computational bottleneck
 caused by geometric stiffness.
Recently,
 Berger and LeVeque \cite{berger2024ImpAMR_for_Dispersive}
 proposed a patch-based implicit subcycled AMR method
 for realistic tsunami modeling problems,
 achieving optimal efficiency and minimal numerical dissipation.
 
Under the aforementioned block-structured framework, 
 subcycled AMR has been improved to fourth-order accuracy
 for hyperbolic conservation laws \cite{mccorquodale2011high}
 and recently for compressible Navier-Stokes equations
 \cite{emmett2019fourth,christopher2022high}. 
Two key components of both second-order and fourth-order subcycled AMR 
 are the refluxing at the coarse-fine interface
 and the averaging of values on a fine level
 to replace those on the coarser level; 
 both serve to maintain the consistency of data across multiple levels.

To the best of our knowledge,
 no fourth-order subcycled AMR exists
 for solving the INSE with no-slip or Dirichlet boundary conditions.
The fundamentally different nature of INSE
 from hyperbolic conservation laws
 and compressible flows makes it difficult to
 borrow ideas from successes of current fourth-order subcycled AMR
 \cite{mccorquodale2011high,emmett2019fourth,christopher2022high}.
On the other hand, a straightforward generalization
 of second-order subcycled AMR for INSE
 \cite{Almgren1998ConservativeAdaptiveProjection,Martin2008cellcenteredadaptiveprojection}
 to fourth-order accuracy would lead to order reductions
 and numerical instabilities.
Most of the difficulties concern the solenoidal condition.
\begin{enumerate}[(A)]
\item How to fulfill the divergence-free constraint in (\ref{eq:INS}b)?
  Indeed, a fast increase of velocity divergence, 
  particularly at domain corners, 
  is almost always the precursor to fatal numerical instability.
\item Let $\dom^{\ell}$ denote a subset of $\dom$
    to be refined at the $\ell$th AMR level; 
    see \mbox{Figure \ref{fig:AMRHier}(b)}.
    Then (\ref{eq:INS}b)
    and the divergence theorem over $\dom^{\ell}$
    dictate a compatibility condition
    $\int_{\partial \dom^{\ell}}
    \mathbf{n}\cdot \mathbf{u}|_{\partial \dom^{\ell}} = 0$,
    which, similar to (\ref{eq:INSE-solvabilityC}), 
    should be satisfied as accurately as possible
    to minimize numerial instability, 
    as it is also part of the solvablility conditions of
    pressure Poisson equations (PPEs). 
  In synchronized AMR,
  a fine level
  is embedded in its coarser level at any time, 
  hence it is sufficient to enforce
  $\int_{\partial \dom^{\ell}} \mathbf{n}\cdot \mathbf{u}^b = 0$
  by refluxing at the coarse-fine interface.
  However, in subcycled AMR, 
  the advancing of a fine level inevitably arrives at some time instances 
  when there exist, for the coarse-fine interface,
  neither underlying coarse data nor obvious boundary conditions.
  Then, how do we determine, both theoretically and computationally, 
  these interface conditions so that
  $\int_{\partial \dom^{\ell}} \mathbf{n}\cdot \mathbf{u}^b = 0$
  is satisfied to machine precision 
  without deteriorating the fourth-order accuracy?
\item After a fine level is advanced
  to the time of its coarser level,
  data on the two levels have to be synchronized
  to enforce consistency across the AMR hierarchy.
  In the context of INSE,
  what does this consistency mean?
  How should we synchronize multiple levels
  to prevent numerical instabilities and order reductions?
\end{enumerate}

In this work, 
 we resolve all the above core difficulties
 to propose a fourth-order adaptive projection method
 for solving INSE with subcycling in time.
The governing equations of INSE
 are equivalently reformulated as GePUP-E in \Cref{def:GePUP-E}
 with three variables $\mathbf{w}$, $\mathbf{u}$, and $q$, 
 where the evolutionary velocity $\mathbf{w}$
 needs not be divergence-free
 and the solenoidal velocity $\mathbf{u}$
 and the auxiliary scalar $q$
 are considered as \emph{instantaneous} functions of $\mathbf{w}$
 at any given time instance, cf. (\ref{eq:GePUP-E}c--f). 
As stated in \Cref{thm:decayOfVelDiv-GePUP-E},
 $\nabla\cdot\mathbf{w}$ is governed by a heat equation
 with homogeneous Dirichlet condition
 so that the maximum principle of heat equations
 dictates an exponential decay of $\nabla\cdot\mathbf{w}$,
 indirectly enforcing the solenoidal condition
 and significantly contributing to numerical stability.
This design choice answers (A) from the angle 
 of partial differential equations (PDEs). 

In \Cref{sec:gepup-e-level},
 we further adapt GePUP-E for the subdomain $\dom^{\ell}$
 of the $\ell$th refinement level
 and select the interface condition of $q$ in a way so that
 the exponential decay of $\nabla\cdot\mathbf{w}$
 is preserved over $\dom^{\ell}$.
The GePUP-E formulation in \Cref{def:GePUP-E-OneLevel}
 and the interface conditions in (\ref{eq:GePUP-localized})
 constitute our answer to (B) on the theoretical side.

\Cref{sec:spat-discr} and \Cref{sec:IMEX}
 are brief summaries of
 the finite-volume-based spatial operators
 and the implicit-explicit time integrators, respectively.
The novelty of this work starts
   in \Cref{sec:adapt-mesh-refin} onwards,
   where standard components of block-structured AMR
   are assembled into a synchronized AMR
   which, besides its own algorithmic values,
   also serves to jump start the subcycled AMR
   in \Cref{alg:time-stepping_procedure}.

The interface conditions in (\ref{eq:GePUP-localized})
 are approximated to fourth-order accuracy
 by spatiotemporal interpolations
 detailed in \Cref{subsec:compute-coarse-fine},
 with the compatibility condition 
 $\int_{\partial \dom^{\ell}} \mathbf{n}\cdot \mathbf{u}^b = 0$
 enforced to machine precision
 by a simple device in \Cref{sec:enforc-comp-cond}. 
These algorithms 
 constitute our answer to (B) on the computational side.

The AMR hierarchy of multiple levels
 naturally leads to the concept of composite data
 in (\ref{eq:compositeVariable}),
 which further give rise to a class of composite operators 
 defined by the corresponding single-level operators
 in \Cref{sec:spat-discr}
 and the steps (COH-1,2,3,4) in \Cref{sec:AMR_composite-operators}.
A different (yet crucial) class 
 is the composite projection in (\ref{eq:comp_ProjOp})
 that approximates the Leray-Helmholtz projection $\cProjLH$.
In second-order subcycled AMR methods for INSE
 \cite{howell1997adaptive,Martin2008cellcenteredadaptiveprojection}, 
 the application of a composite projection to the composite velocity
 is often followed by fine-to-coarse averaging. 
However,
 for the fourth-order subcycled AMR,
 this fine-to-coarse averaging results in 
 dominant errors near the coarse-fine interface
 and a reduction of the velocity accuracy to the third order.
Fortunately,
 the fourth-order accuracy is recovered if,
 in synchronizing multiple levels
 (at line 11 in \Cref{alg:time-stepping_procedure}), 
 only the composite projection in (\ref{eq:comp_ProjOp})
 is applied
 without fine-to-coarse averaging of the projected velocity. 
This resolves (C).
 
In \Cref{sec:numerical-tests},
 we perform various benchmark tests 
 to demonstrate the fourth-order accuracy
 and superb efficiency of the proposed method.
In \Cref{sec:conclusion}, 
 we conclude this paper with future research prospects.



\section{The GePUP-E formulation of INSE}
\label{sec:math-form}

In this section,
 we briefly review the GePUP-E formulation \cite{Li2025GePUP-E},
 whose governing equations are
 theoretically equivalent to those of INSE
 but are more amenable to fulfilling the divergence-free condition
 and more conducive to flexible designs of high-order methods.
 
Zhang \cite{zhang2016gepup} proposed the GePUP formulation 
 in which the main evolutionary variable
 is designed to be a velocity $\mathbf{w}$
 that may or may not be solenoidal.
Then the electric boundary conditions
 \cite{shirokoff2011efficient,rosales2021high}
 was adapted into GePUP to form
 the GePUP-E formulation of the INSE (\ref{eq:INS})
 with no-slip conditions \cite{Li2025GePUP-E}, 
 which we slightly generalize in this work to have
 

 
 
\begin{defn}
  \label{def:GePUP-E}
  The \emph{GePUP-E formulation of the INSE (\ref{eq:INS})
    with Dirichlet conditions} $\mathbf{u}^b$ is 
  \begin{subequations}
    \label{eq:GePUP-E}
    \begin{alignat}{2}
      \frac{\partial \wbold}{\partial t} &= \mathbf{g}-\ubold\cdot
      \nabla \ubold-\nabla q+\nu\lapl \wbold &\quad& \ \mathrm{in}\ \dom, \\
      \wbold\cdot \boldsymbol{\tau} &= \mathbf{u}^b\cdot\boldsymbol{\tau}, \quad \nabla \cdot \mathbf{w} = 0
      && \ \mathrm{on}\  \partial \dom, \\
      \ubold &= \cProjLH\wbold && \ \mathrm{in}\  \dom, \\
      \ubold \cdot \nbold &= \mathbf{u}^b\cdot \mathbf{n} && \ \mathrm{on}\  \partial\dom, \\
      \lapl q &= \nabla\cdot(\mathbf{g}-\ubold\cdot\nabla \ubold) && \ \mathrm{in}\  \dom, \\
      \nbold\cdot \nabla q &= \nbold\cdot\left(\mathbf{g} -\ubold\cdot\nabla \ubold
        + \nu\lapl \wbold - \frac{\partial\mathbf{u}^b}{\partial t}\right)
      + \lambda \nbold\cdot(\mathbf{w}-\mathbf{u}^b) && \ \mathrm{on}\  \partial \dom,
    \end{alignat}
  \end{subequations}
where 
  $\mathbf{u}$ is the divergence-free velocity in (\ref{eq:INS}),
  $\mathbf{w}=\mathbf{u}-\nabla \phi$ a non-solenoidal velocity
  for some scalar function $\phi$,
  $\cProjLH$ the Leray-Helmholtz projection,
  $\lambda$ a nonnegative penalty parameter,  
  $\mathbf{n}$ and $\boldsymbol{\tau}$
  the unit normal and unit tangent vectors
  of $\partial \dom$, respectively.
 The two velocities have the same initial condition
  in $\overline{\cal D}$, the closure of ${\cal D}$, 
  i.e.,
  \begin{equation}
    \label{eq:initialConditionW}
   \forall \xbold \in \overline{\dom}, \quad
   \mathbf{w}(\xbold,t_0) = \mathbf{u}(\xbold, t_0).
 \end{equation}
\end{defn}

By the arguments in \cite[Section 3]{Li2025GePUP-E}, 
 it is straightforward to prove
 the equivalence of the INSE in (\ref{eq:INS})
 and the GePUP-E in (\ref{eq:GePUP-E}). 
Then the equivalence of $\mathbf{w}$ and $\mathbf{u}$
 and the boundary condition (\ref{eq:INS}c) yield
 $\mathbf{n}\cdot\mathbf{w} = \mathbf{n}\cdot\mathbf{u}^b$, 
 which, however, 
 is not explicitly included in (\ref{eq:GePUP-E}). 
Instead, the normal component of (\ref{eq:GePUP-E}a) and 
 (\ref{eq:GePUP-E}f) imply
 \begin{equation}
   \label{eq:normalVel}
   \frac{\partial }{\partial t}
   \left[\mathbf{n}\cdot(\mathbf{w}-\mathbf{u}^b)\right]
   =
   -\lambda\mathbf{n}\cdot(\mathbf{w}-\mathbf{u}^b) \quad \mathrm{on}\ \partial\dom,
 \end{equation}
 which, together with (\ref{eq:initialConditionW}) and (\ref{eq:GePUP-E}d), 
 gives 
 $\mathbf{n} \cdot (\mathbf{w}(t) - \mathbf{u}^b(t))=0$
 for any $t\ge t_0$.
In addition, 
 (\ref{eq:GePUP-E}b) implies the following Neumann condition
 for $w_n:=\mathbf{w}\cdot\mathbf{n}$ in
 (\ref{eq:GePUP-E}a):
 \begin{equation}
   \label{eq:NeumannConditionForNormalVel}
   \frac{\partial w_n}{\partial n} =
   -\sum\nolimits^{\Dim-1}_{i=1}
   \frac{\partial u^b_{\tau_i}}{\partial \tau_i}
   \quad \mathrm{on}~ \partial \dom,
 \end{equation}
 where the subscript ``$_{\tau_i}$''
 denotes the $i$th tangential component.
 

The most important feature of GePUP-E
 is the exponential decay of $\nabla\cdot\mathbf{w}$. 
 
 \begin{theorem}
  \label{thm:decayOfVelDiv-GePUP-E}
  The evolution of $\nabla\cdot\mathbf{w}$
  in the GePUP-E formulation (\ref{eq:GePUP-E})
  is governed by the heat equation
  with homogeneous Dirichlet conditions, 
   \begin{equation}
     \label{eq:heatEqOfDivW}
     \left\{
       \begin{array}{rll}
         \frac{\partial\left(\nabla\cdot \mathbf{w}\right)}{\partial t}
         &= \nu\Delta\left(\nabla\cdot \mathbf{w}\right)
         &\quad \mathrm{in}\ \dom, \\
         \nabla\cdot \mathbf{w} &= 0 &\quad \mathrm{on}\ \partial\dom,
       \end{array}
     \right.
   \end{equation}
   which implies 
   $\|\nabla\cdot\mathbf{w}(t)\| \le
     e^{-\nu C(t-t_0)} \|\nabla\cdot\mathbf{w}(t_0)\|$
   where $t_0$ is the initial time
   and $C$ a positive constant independent of $\mathbf{w}$.
 \end{theorem}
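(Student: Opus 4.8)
The plan is to verify that $\div\wbold$ solves the advertised initial–boundary-value problem and then to extract the decay estimate by a standard energy argument. First I would apply the divergence operator to the evolution equation (\ref{eq:GePUP-E}a). Assuming enough smoothness that $\div$ commutes with both $\lapl$ and $\partial_t$, this yields
\begin{equation*}
  \frac{\partial(\div\wbold)}{\partial t}
  = \div(\mathbf{g}-\ubold\cdot\nabla\ubold) - \lapl q + \nu\lapl(\div\wbold).
\end{equation*}
The crucial observation is that the pressure Poisson equation (\ref{eq:GePUP-E}e) is engineered precisely so that $\lapl q = \div(\mathbf{g}-\ubold\cdot\nabla\ubold)$; hence the first two terms cancel identically and the interior equation in (\ref{eq:heatEqOfDivW}) follows. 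The homogeneous Dirichlet condition requires no work at all: it is imposed directly as the second relation in (\ref{eq:GePUP-E}b).

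Having established (\ref{eq:heatEqOfDivW}), I would set $\psi := \div\wbold$ and run the usual $L^2$ energy estimate. Multiplying the heat equation by $\psi$, integrating over $\dom$, and integrating by parts gives
\begin{equation*}
  \tfrac{1}{2}\frac{\dif}{\dif t}\|\psi\|^2
  = \nu\int_{\dom}\psi\,\lapl\psi \dif\xbold
  = -\nu\|\nabla\psi\|^2 + \nu\int_{\partial\dom}\psi\,\frac{\partial\psi}{\partial n}\dif S,
\end{equation*}
where the boundary integral vanishes because $\psi=0$ on $\partial\dom$. Since $\psi$ has zero trace, the Poincar\'e inequality supplies a constant $C>0$ depending only on the geometry of $\dom$ (sharply, the first Dirichlet eigenvalue of $-\lapl$), and therefore independent of $\wbold$, with $\|\nabla\psi\|^2 \ge C\|\psi\|^2$. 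Combining these yields $\frac{\dif}{\dif t}\|\psi\|^2 \le -2\nu C\|\psi\|^2$, and Gr\"onwall's lemma produces $\|\psi(t)\|\le e^{-\nu C(t-t_0)}\|\psi(t_0)\|$, exactly the claimed bound.

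I do not anticipate a genuine obstacle: the formulation is constructed so that the cancellation in the first step is exact, and the rest is a textbook computation. The only points warranting care are (i) justifying the interchange of $\div$ with $\lapl$ and $\partial_t$, which requires assuming $\wbold(\cdot,t)$ is smooth enough that $\div(\lapl\wbold)=\lapl(\div\wbold)$, and (ii) confirming that the Poincar\'e constant is intrinsic to $\dom$ and hence uniform in time. Neither raises any difficulty beyond bookkeeping, so if anything the ``hard part'' is purely conceptual, namely recognizing that the boundary condition (\ref{eq:GePUP-E}b) and the Poisson equation (\ref{eq:GePUP-E}e) together are exactly what collapse the divergence dynamics to a pure heat equation.
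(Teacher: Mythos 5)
Your proof is correct, and it is worth noting that the paper itself does not prove this theorem in-line: it simply cites \cite[Theorem 4]{Li2024GePUP-E}, so you have in effect supplied the argument that the paper outsources. Your derivation of the heat equation---taking the divergence of (\ref{eq:GePUP-E}a), cancelling $\lapl q$ against $\div(\mathbf{g}-\ubold\cdot\nabla\ubold)$ via the PPE (\ref{eq:GePUP-E}e), and reading the homogeneous Dirichlet condition directly off (\ref{eq:GePUP-E}b)---is precisely the route the paper takes when it proves the analogous single-level result, \Cref{thm:localHeatEqOfDivW}, where ``(\ref{eq:GePUP-E}e) and the divergence of (\ref{eq:GePUP-E}a)'' give the interior equation and (\ref{eq:GePUP-E}b) gives the boundary condition. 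Where you genuinely differ is the decay estimate: the paper's surrounding discussion attributes the exponential decay to the maximum principle of the heat equation (which, via comparison with the principal Dirichlet eigenfunction, controls pointwise values), whereas you run an $L^2$ energy argument---multiply by $\psi=\div\wbold$, integrate by parts, discard the vanishing boundary term, apply Poincar\'e, and close with Gr\"onwall---obtaining the stated inequality with $C$ equal to the first Dirichlet eigenvalue of $-\lapl$. Since the theorem statement does not specify the norm, both mechanisms are legitimate; the energy route is self-contained and cleaner for the $L^2$ bound, while the maximum-principle route is what one would invoke for an $L^\infty$ bound. Your two flagged caveats (smoothness to commute $\div$ with $\lapl$ and $\partial_t$, and the time-uniformity of the Poincar\'e constant, which depends only on the geometry of $\dom$) are exactly the right ones and pose no difficulty.
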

 \begin{proof}
   See \cite[Theorem 4]{Li2025GePUP-E}.
 \end{proof}

Theorem \ref{thm:decayOfVelDiv-GePUP-E}
 and the initial condition (\ref{eq:initialConditionW})
 dictate that $\nabla\cdot \mathbf{w}=0$ always holds.
In practical computations, however, 
$\Div \avg{\mathbf{w}}$,
 the discrete counterpart of
 $\nabla\cdot \mathbf{w}$, might not be zero
 at each time step, 
 due to the imperfection of the discrete projection  
 and the discretization errors of other spatial operators
 such as $\mathbf{u}\cdot\nabla \mathbf{u}$.
Then the exponential decay of $\nabla\cdot \mathbf{w}$
 becomes valuable 
 in controlling the evolution of $\Div \avg{\mathbf{w}}$.
Such a mechanism is fully exploited in this work.
 



\section{Spatial discretization based on finite volumes}
\label{sec:spat-discr}



Hereafter we assume that the problem domain $\dom$ is a rectangle
 so that it can be partitioned into
 a structured array of \emph{control volume}s or \emph{cell}s, 
 each of which is a square $\mathcal{C}_{\ibold}$ of size $h$
 with the multi-index $\ibold \in \mathbb{Z}^{\Dim}$ 
 indicating the rectangular structure of the grids.
The high and low faces of a cell $\mathcal{C}_{\ibold}$
 along dimension $d$ are denoted
 by $\mathcal{F}_{\ipfed{1}}$ and $\mathcal{F}_{\imfed{1}}$,
 respectively. 

For a function $\phi: \mathbb{R}^{\Dim}\rightarrow \mathbb{R}$, 
its \emph{cell-averaged} value over $\mathcal{C}_{\ibold}$
and its \emph{face-averaged} value over $\mathcal{F}_{\ipmfed{1}}$
are respectively given by
\begin{equation}
  \label{eq:averagedValues}
  \avgI{\phi}
  := \frac{1}{h^{\Dim}}\int_{{\mathcal C}_{\ibold}}
  \phi\left(\mathbf{x}\right)\,\dif \mathbf{x};
  \quad
   \avg{\phi}_{\ipmfed{1}}
   := \frac{1}{h^{\Dim-1}}
   \int_{{\mathcal F}_{\ipmfed{1}}}
   \phi\left(\mathbf{x}\right)\,\dif \mathbf{x}.
\end{equation}

As shown in \cite[Appendix A]{zhang2012fourthAMR-ADE}, 
 cell averages can be converted to face averages by
 \begin{equation}
   \label{eq:cell2face}
   \begin{array}{rl}
     \avg{\phi}_{\ipfed{1}}
     &= 
     \frac{1}{12}\left(
     - \avg{\phi}_{\ibold+2\ebold^d} 
     + 7 \avg{\phi}_{\ibold+\ebold^{d}} + 7 \avgI{\phi} 
     - \avg{\phi}_{\ibold-\ebold^d}
     \right) + O(h^4),\\
     \avg{\frac{\partial \phi}{\partial x_d}}_{\ipfed{1}}
     &= 
     \frac{1}{12 h}\left(
     - \avg{\phi}_{\ibold+2\ebold^d} 
     + 15 \avg{\phi}_{\ibold+\ebold^{d}} - 15 \avgI{\phi} 
     + \avg{\phi}_{\ibold-\ebold^d}
     \right) + O(h^4).
   \end{array}
 \end{equation}

The discrete gradient, the discrete divergence,
 and the discrete Laplacian
 act on cell averages as follows. 
 \begin{subequations}
   \label{eq:discreteOps}
   \begin{align}
     \Grad_d \avg{\phi}_{\ibold}
     &:= \frac{1}{12 h} \left(
     - \avg{\phi}_{\ibold+2\ebold^d} 
     + 8 \avg{\phi}_{\ibold+\ebold^d} - 8 \avg{\phi}_{\ibold-\ebold^d}
     + \avg{\phi}_{\ibold-2\ebold^d}
     \right),
     \\
     \Div \avg{\ubold}_{\ibold}
       &:= \frac{1}{12 h} \sum_d\left( 
         - \avg{u_d}_{\ibold+2\ebold^d}
         + 8 \avg{u_d}_{\ibold+\ebold^d} - 8 \avg{u_d}_{\ibold-\ebold^d} 
         + \avg{u_d}_{\ibold-2\ebold^d}
         \right),
     \\
     \Lapl \avg{\phi}_{\ibold}
       &:= \frac{1}{12h^2}
         \sum_{d} \left(
         \!-\! \avg{\phi}_{\ibold+ 2\ebold^{d}}
         \!+\! 16 \avg{\phi}_{\ibold+\ebold^{d}} 
         \!-\! 30 \avg{\phi}_{\ibold}
         \!+\! 16 \avg{\phi}_{\ibold-\ebold^{d}} 
         \!-\! \avg{\phi}_{\ibold - 2\ebold^{d}}
         \right).
       \end{align}
 \end{subequations}
 
In particular,
 the cell-averaged convection can be approximated by
 \begin{equation}
   \label{eq:convOp}
   \Div \avg{\mathbf{u}\mathbf{u}}_{\ibold}
   := \frac{1}{h} \sum_d 
   \left(
     \mathbf{F}\avg{u_d,\mathbf{u}}_{\iphed} 
     -\mathbf{F}\avg{u_d,\mathbf{u}}_{\imhed} 
   \right),
 \end{equation}
 where the face average of the product of two scalars is given by 
\begin{displaymath}
  \begin{array}{l}
  \mathbf{F}\avg{\varphi,\psi}_{\ipfed{1}}
  := 
  \avg{\varphi}_{\ipfed{1}}  \avg{\psi}_{\ipfed{1}} 
  +\frac{h^2}{12}
  \sum \limits_{d'\ne d}
  \left(\Grad_{d'}^{\perp}\varphi \right)_{\ipfed{1}}
  \left( \Grad_{d'}^{\perp}\psi \right)_{\ipfed{1}},
  \end{array}
\end{displaymath}
 and
 $\Grad_{d'}^{\perp}$ is the discrete gradient
 in the transverse directions
 \begin{displaymath}
   \left( \Grad_{d'}^{\perp} \varphi \right)_{\ipfed{1}} 
   :=  \frac{1}{2h}\left(
     \avg{\varphi}_{\ipfed{1}+\ebold^{d'}}
     - \avg{\varphi}_{\ipfed{1}-\ebold^{d'}}
   \right).
 \end{displaymath}
 
 It is shown in
 \cite[Proposition 1]{Zhang2014fourthorderapproximateprojection}   
 that $\Grad$, $\Div$, $\Lapl$,
 and $\Div\avg{\mathbf{u}\mathbf{u}}$
 are all fourth-order accurate in
 approximating
 cell averages of $\nabla$, $\nabla\cdot$, $\Delta$,
 and $\mathbf{u}\cdot\nabla\mathbf{u}$, respectively.

A discrete approximate projection
 is formed from the above discrete operators, 
 \begin{equation}
   \label{eq:appProj}
   \Proj := \Iden - \Grad\Lapl^{-1}\Div,
 \end{equation}
 which is fourth-order accurate
 in approximating $\cProjLH$ in (\ref{eq:GePUP-E}c)
 \cite[Theorem 5]{Zhang2014fourthorderapproximateprojection}.
The numerical stability of $\Proj$
 on no-penetration rectangular domains was demonstrated in
 \cite{zhang2016gepup}
 by verifying its spectral radius being one
 within machine precision on uniform grids; 
 see \cite[Section 4]{zhang2016gepup}
 for more details on the implementation of $\Proj$.

To facilitate the evaluation of discrete operators
 in (\ref{eq:discreteOps}), 
 we wrap up the rectangular grid
 with two layers of ghost cells
 and set values of these ghost cells
 according to different boundary conditions 
 so that each formula in (\ref{eq:discreteOps})
 stays the same for all cell averages within the domain, 
 even if the cell is abutting the domain boundary. 
This decouples discretizing spatial operators
 from fulfilling physical boundary conditions. 
 
For non-periodic domain boundaries,
 values of ghost cells are calculated to the fifth-order accuracy
 from interior cell averages and the given boundary conditions.
For example,
 Dirichlet boundary conditions 
 are fulfilled
 by filling the ghost cells with
 \begin{equation}
   \label{eq:velBCHomoDiri5th}
   {
     \footnotesize
     \begin{aligned}
       \avg{\phi}_{\ibold+ \ebold^n}
       \!=& \frac{1}{12}
       \!\left(
         -77 \avg{\phi}_{\ibold}\! +\! 43 \avg{\phi}_{\ibold- \ebold^n}
         \!-\!17 \avg{\phi}_{\ibold-2 \ebold^n} \!+\! 3 \avg{\phi}_{\ibold-3\ebold^n}
       \right)
       \!+\! 5\avg{\phi}_{\ibold+\frac{1}{2}\ebold^n}  
       \!+\! O(h^5),\\
       \avg{\phi}_{\ibold+ 2\ebold^n}
       \!=& \frac{1}{12}
       \!\left(
         -505 \avg{\phi}_{\ibold} \!+\! 335 \avg{\phi}_{\ibold- \ebold^n}
         \!-\!145 \avg{\phi}_{\ibold-2 \ebold^n} \!+\! 27 \avg{\phi}_{\ibold-3\ebold^n}
       \right)
       \!+\! \frac{75}{3}\avg{\phi}_{\ibold+\frac{1}{2}\ebold^n}
       \!+\! O(h^5),
     \end{aligned}
     }
   \end{equation}
   where ``$\ebold^n$'' denotes the unit normal vector
   of a local arc of the domain boundary,
   ${\cal F}_{\ibold+\frac{1}{2}\ebold^n}\subset \partial {\cal D}$
   the high face of the interior cell ${\cal C}_{\ibold}$,
   and $\avg{\phi}_{\ibold+\frac{1}{2}\ebold^n}$
   the face-averaged Dirichlet condition. 
Similarly, 
a Neumann boundary condition is fulfilled by setting
 \begin{equation}
   \label{eq:Neumann5}
   {
     \footnotesize
     \begin{aligned}
       \avg{\phi}_{\ibold+ \ebold^n}
       \!=& \frac{1}{10}
       \!\left(
         5 \avg{\phi}_{\ibold} \!+\! 9 \avg{\phi}_{\ibold- \ebold^n}
         \!-5 \avg{\phi}_{\ibold-2 \ebold^n} \!+\!  \avg{\phi}_{\ibold-3 \ebold^n}
       \right) 
       \!+\! \frac{6h}{5}\avg{\frac{\partial \phi}{\partial n}}_\iphen
       \!+\! O(h^5),
       \\
       \avg{\phi}_{\ibold+ 2\ebold^n}
       \!= &\frac{1}{10}
       \!\left(
         -75\! \avg{\phi}_{\ibold} \!+\! 145\! \avg{\phi}_{\ibold- \ebold^n}
         \!-\!75 \avg{\phi}_{\ibold-2 \ebold^n} \!+\! 15 \avg{\phi}_{\ibold-3 \ebold^n} 
       \right)
       \!+\! 6 h \avg{\frac{\partial \phi}{\partial n}}_\iphen  
       \!+\! O(h^5),
     \end{aligned}}
 \end{equation}
 where $\avg{\frac{\partial \phi}{\partial n}}_\iphen$ denotes
 the face-averaged Neumann condition.


Sometimes we need to approximate 
 the face average of a normal derivative 
 from a Dirichlet boundary condition and interior cell averages; 
 this is done by calculating 
 \begin{equation}
   \label{eq:smoothExtend5p-face-deri}
   {\scriptsize
     \begin{aligned}
       \avg{\frac{\partial \phi}{\partial n}}_{\iphen}
       \!=\!\frac{1}{72h}
       \!\left(
         -415\! \avg{\phi}_{\ibold} \!+\! 161\! \avg{\phi}_{\ibold-\ebold^n}
         \!-\! 55 \! \avg{\phi}_{\ibold-2\ebold^n} 
         \!+\! 9 \! \avg{\phi}_{\ibold-3\ebold^n}
       \right) 
       \!+\! \frac{25}{6h}\avg{\phi}_{\iphen}
       \!+\! O(h^4).
     \end{aligned}
   }
 \end{equation}
 
Thanks to the fifth-order accuracy
 of (\ref{eq:velBCHomoDiri5th}) and (\ref{eq:Neumann5}),
 operators of first derivatives
 are discretized to the fourth order.
 (\ref{eq:smoothExtend5p-face-deri})
 is aligned with this spirit. 
As for second derivatives such as the Laplacian,
 (\ref{eq:velBCHomoDiri5th}) and (\ref{eq:Neumann5})
 lead to third-order truncation errors
 on a set of codimension one
 near the domain boundary.

 


\section{Time integration via implicit-explicit schemes}
\label{sec:IMEX}

Integrate (\ref{eq:GePUP-E}a) over the control volumes,
 apply the definition of the discrete operators in
 (\ref{eq:discreteOps})
 and (\ref{eq:convOp}), 
 neglect the truncation errors, 
 and we obtain a system of ODEs, 
\begin{equation}
  \label{eq:ODE-MOL}
  \frac{\dif \avg{\mathbf{w}}}{\dif t}
  = \exOp + \nu\Lapl\avg{\mathbf{w}},
\end{equation}
where the diffusion term $\nu\Lapl\avg{\mathbf{w}}$
 is stiff for large $\nu$
 while $\exOp := \avg{\mathbf{g}}
 -\Div \avg{\mathbf{u}\mathbf{u}} - \Grad\avg{q}$
 is not stiff. 
By (\ref{eq:initialConditionW}),
 we supplement (\ref{eq:ODE-MOL}) with the initial condition
 $\avg{\mathbf{w}}(t_0) = \avg{\mathbf{u}}(t_0)$.

As a prominent feature of the GePUP-E formulation, 
 $\mathbf{u}$ is determined instantanously
 from the main evolutionary variable $\mathbf{w}$
 and $q$ is also determined instantaneously
 from $\mathbf{w}$ and $\mathbf{u}$.
As such, 
a time integrator can be employed \emph{in a black-box manner}
to solve (\ref{eq:ODE-MOL}). 
For example,
 an explicit Runge-Kutta method is a good choice
 for flows with very high Reynolds numbers.
On the other hand,
 for flows of low/medium Reynolds numbers,
 we would like to switch to an additive Runge-Kutta method
 so that the stiff and nonstiff terms can be treated
 with implicit and explicit Runge-Kutta methods, respectively.
The agility of switching the time integrator
 without worrying about its internal details
 is enabled by the fact of $\mathbf{w}$
 being the only evolutionary variable
 without the divergence-free constraint; 
 see \cite[Section 1]{Li2025GePUP-E}
 for more discussions.
 
In this work, we choose the time integrator to be
 the ERK-ESDIRK method \cite{kennedy2003additive},
 an implicit-explicit (IMEX) scheme in the family of
 additive Runge-Kutta methods.
The detailed steps for solving (\ref{eq:ODE-MOL})
are as follows. 
 \begin{subequations}
   \label{eq:GePUP-IMEX}
   \renewcommand{\arraystretch}{1.2}
   \begin{equation}
     \avg{\mathbf{w}}^{(1)} = \avg{\mathbf{u}}^{(1)} = \avg{\mathbf{w}}^n,
   \end{equation}
   \vspace{-20pt}
   \begin{equation}
     \left\{
     \begin{array}{rl}
       \text{for } s = 2, 3, \ldots, \nStages,& 
       \\
       \Lapl \avg{q}^{(s-1)} =& 
       \Div\bigl( 
       \avg{\mathbf{g}}^{(s-1)} - \Div\avg{\mathbf{u}\mathbf{u}}^{(s-1)} 
       \bigr)
       \\
       \bigl(\Iden - \dt\nu\gamma \Lapl\bigr) \avg{\mathbf{w}}^{(s)}
     =&
     \, \avg{\mathbf{w}}^n
     +\dt \sum^{s-1}_{j=1}  a^{[\text{E}]}_{s,j} \exOp
     \left(\avg{\mathbf{u}}^{(j)}, \avg{q}^{(j)}, t^{(j)}\right)
     \\
     &+ \dt\nu \sum^{s-1}_{j=1}  a^{[\text{I}]}_{s,j} \Lapl\avg{\mathbf{w}}^{(j)},
     \\
     \avg{\mathbf{u}}^{(s)} =& \Proj \avg{\mathbf{w}}^{(s)},
     \end{array}
     \right.
   \end{equation}
   \vspace{-10pt}
   \begin{equation}
     \left\{
     \begin{array}{rl}
     \avg{\mathbf{w}}^{*} =& \avg{\mathbf{w}}^{(\nStages)}
     + \dt \sum^{\nStages}_{j=1} \left(b_j - a^{[E]}_{\nStages,j}\right) 
       \exOp \left(\avg{\mathbf{u}}^{(j)}, \avg{q}^{(j)}, t^{(j)}\right),
     \\
      \avg{\mathbf{u}}^{n+1} =& \Proj \avg{\mathbf{w}}^{*},
      \\
      \avg{\mathbf{w}}^{n+1} =& \avg{\mathbf{u}}^{n+1},
     \end{array}
     \right.
   \end{equation}
  \end{subequations}
  where the superscript ``$^{(s)}$'' denotes the $s$th intermediate stage,
  $t^{(s)} = t_n + c_{s} \dt $ the time of that stage,
  $a^{[E]}_{s,j}$, $a^{[I]}_{s,j}$, $b_j$, $c_s$ 
  the standard coefficients of the \emph{Butcher tableau} in
  \cite[Appendix C]{zhang2012fourthAMR-ADE},
  and $\gamma = a^{[I]}_{s,s}$.
  The boundary conditions for
  the projection step in (\ref{eq:GePUP-IMEX}b,c)
  and the first linear system in (\ref{eq:GePUP-IMEX}b)
  are the face averages of (\ref{eq:GePUP-E}d) and (\ref{eq:GePUP-E}f), respectively.
  As for the second linear system in (\ref{eq:GePUP-IMEX}b), 
  the boundary conditions for the normal and tangential components of $\avg{\mathbf{w}}$
  are face averages of
  (\ref{eq:NeumannConditionForNormalVel})
  and $\mathbf{w}\cdot\boldsymbol{\tau} =
  \mathbf{u}^b\cdot\boldsymbol{\tau}$
  in (\ref{eq:GePUP-E}b), respectively.



\section{Patch-based local refinement}
\label{sec:adapt-mesh-refin}

After defining the concept of an AMR hierarchy
 in \Cref{sec:amr-hierarchy}, 
 we describe in \Cref{sec:data-transf-betw}
 data transfers between adjacent levels
 and extend in \Cref{sec:AMR_composite-operators}
 the single-level discrete operators in (\ref{eq:discreteOps})
 to composite operators
 that act on multiple adjacent levels.
\Cref{subsec:regridding} concerns
 data migration from one AMR hierarchy to another.
Finally, 
 a synchronized AMR method for INSE 
 is introduced in \Cref{sec:an-amr-method-nonsubcycling} 
 as a natural consequence of
 combining the IMEX scheme (\ref{eq:GePUP-IMEX})
 with these AMR components. 
 
\subsection{The AMR hierarchy}
\label{sec:amr-hierarchy}

Denote by $\Upsilon$ a set of grids with different sizes, 
\begin{equation}
  \label{eq:grids}
    \Upsilon := \left\{\Upsilon^\ell :
    \Upsilon^{\ell} \subset \mathbb{Z}^D, \ 
    \Upsilon^{\ell+1} = \mathcal{C}^{-1}_{r}\Upsilon^{\ell},\  
    \ell = 0,1,\cdots, \ell_{\max}
  \right\},
\end{equation}
where each $\Upsilon^{\ell}$ discretizes the domain $\dom$, 
 the \emph{refinement ratio} $r$ 
 is for two successive discretizations, 
 and the \emph{coarsening operator}
 $\mathcal{C}_r : \mathbb{Z}^{\Dim}\rightarrow\mathbb{Z}^{\Dim}$
 is given by
 ${\mathcal{C}}_r(\ibold) = \left\lfloor  \frac{\ibold}{r} \right\rfloor$.
Then the grid sizes of two adjacent discretizations are related
 by $h^{\ell}=r h^{\ell+1}$.
Typically $r$ is assumed to be a constant,
 either 2 or 4,
 across all levels.

As illustrated in \Cref{fig:AMRHier}, 
 an \emph{AMR hierarchy} $\Omega_{\Upsilon}$
 is a set of consecutive \emph{AMR levels} $\Omega^{\ell}$, 
 each of which consists of a number of 
 pairwise disjoint \emph{box}es or \emph{patch}es $\Omega^{\ell}_k$:
 \begin{equation}
   \label{eq:AMRHier}
   \begin{array}{rl}
     \Omega_{\Upsilon}
     &:= \left\{
       \Omega^\ell :\ \Omega^0=\Upsilon^0;\ 
       \forall \ell>0,\ \Omega^{\ell} \subset \Upsilon^\ell
       \right\},
     \\
     \Omega^\ell
     &:= \left\{ 
       \Omega^{\ell}_k : \;  
       k \neq j\, \Leftrightarrow\, 
       \Omega^{\ell}_k \cap \Omega^{\ell}_{j} = \emptyset
       \right\},
     \\
     \Omega^{\ell}_k(\jbold_{\min}, \jbold_{\max})
     &:= \left\{
       \ibold \in\mathbb{Z}^{\Dim} :\ 
       \jbold_{\min} \le \ibold \le \jbold_{\max}
       \right\},
   \end{array}
 \end{equation}
 where each patch $\Omega^{\ell}_k$ is a rectangular box
 uniquely determined by the two multi-indices $\jbold_{\min}$
 and $\jbold_{\max}$
 and ``$\ibold \le \jbold$'' holds if and only if
 ``$i_d\le j_d$'' holds for each $d=1,\ldots, \Dim$.

 \begin{figure}
   \centering
   \subfigure[the domain $\Omega^0$ with two refinement levels]{
     \includegraphics[scale=0.635]{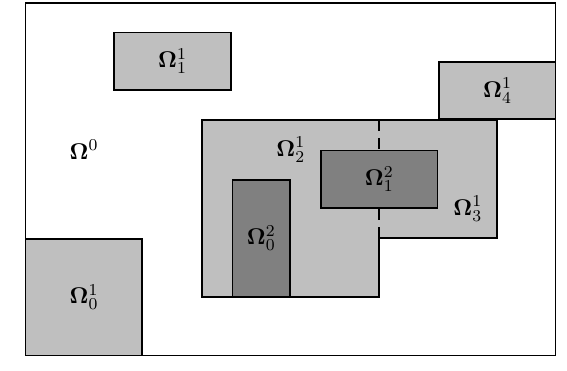}
   }
   \subfigure[subdomain $\dom^1 = \mathop{\cup}\limits^{2}_{c=0}\dom^{1,c} $ and its boundary]{
     \includegraphics[scale=0.635]{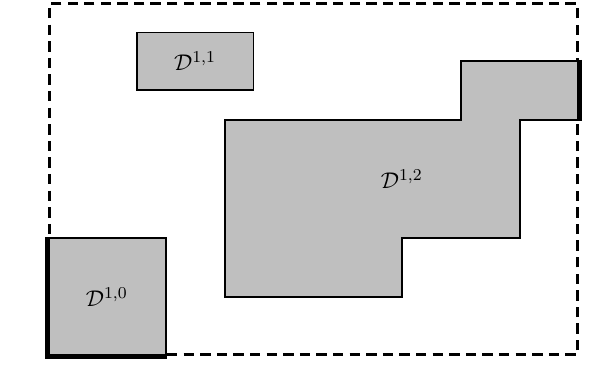}
   }
   \subfigure[filling ghost cells for $\Omega^1_4$]{
     \includegraphics[scale=0.46]{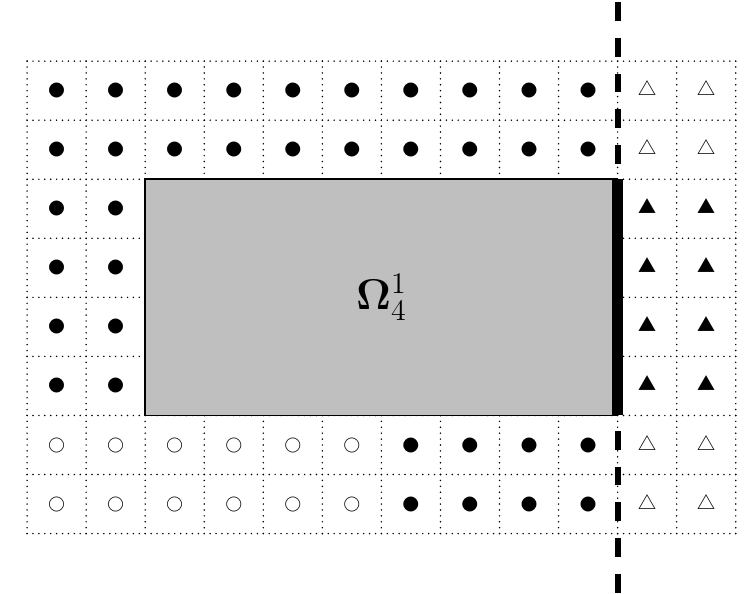}
   }
   \subfigure[the invalid  and valid region of $\Omega^1$]{
     \includegraphics[scale=0.635]{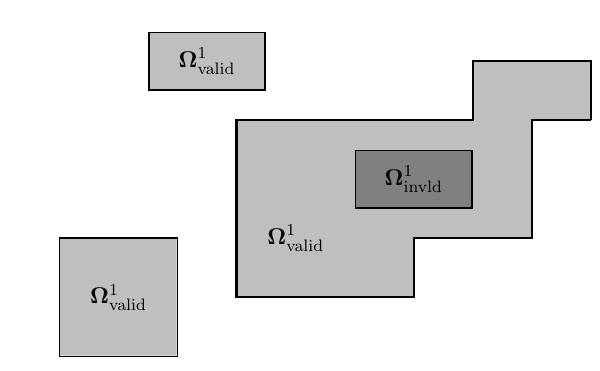}
   }
   \caption{An AMR hierarchy.
     In subplot (a), 
     $\Omega^1_0$ and $\Omega^1_4$ are properly nested
     at the physical boundary; 
     $\Omega^1_1$, $\Omega^1_2$, $\Omega^1_3$,
     and $\Omega^2_1$
     are properly nested at the coarse-fine interface;
     but $\Omega^2_0$ is \emph{not} properly nested.
     In subplot (b),
     $\dom^{1,c}$ denotes the $c$th component of the first subdomain,
     the dashed lines the physical boundaries $\partial\dom$,
     and the thick and thin solid lines
     $\partial\dom^1_P$ and $\partial\dom^1_I$, respectively.
     In subplot (c),
     all ghost cells of patch $\Omega^1_4$ are presented,
     where ``$\circ$'' represents a ghost cell to be filled by
     exchanging values of adjacent patches,
     ``$\bullet$'' a ghost cell to be filled by AMRCFI,
     ``$\vartriangle$'' and ``$\blacktriangle$''
     those to be filled by
     (\ref{eq:velBCHomoDiri5th}) or (\ref{eq:Neumann5}).
     Those ``$\vartriangle$'' ghost cells have to be filled
     after ``$\bullet$''s and ``$\circ$''s are filled.
     In subplot (d),
       the invalid region
       $\Omega^1_{\mathrm{invld}}= \mathcal{C}_{r}(\Omega^2_1)$ is shown in gray
       (the patch $\Omega^2_0$ is not shown due to its improper nesting), 
       whereas patches of the valid region
       $\Omega^1_{\mathrm{valid}} = \Omega^1 \setminus \Omega^1_{\mathrm{invld}}$
       are shaded in light gray.
     }
   \label{fig:AMRHier}
 \end{figure}

\emph{The $\ell$th subdomain} $\mathcal{D}^{\ell}$
 is the region discretized by grids in $\Omega^{\ell}$
 that satisfies 
 \begin{equation}
   \label{eq:embeddingOfSubdomain}
   \forall \ell=1,2,\ldots,\ell_{\max},\quad
   \mathcal{D}^{\ell}\subset \mathcal{D}^{\ell-1},
 \end{equation}
 where $\mathcal{D}^0:= \mathcal{D}$.
As shown in \Cref{fig:AMRHier}(b), 
 $\mathcal{D}^\ell$ may contain multiple connected regions
 with its boundary as 
 \begin{equation}
   \label{eq:subdomainBoundary}
   \partial \dom^{\ell}
   = \partial \dom^{\ell}_{P}\cup \partial \dom^{\ell}_{I}
 \end{equation}
 where \emph{the physical boundary} 
 and \emph{the coarse-fine interface at level $\ell$} are respectively
 \begin{equation}
   \label{eq:subdomainBdryType}
   \partial\dom^{\ell}_P
   := \partial \dom^{\ell} \cap \partial \dom; \quad
   \partial\dom^{\ell}_I
   := \partial \dom^{\ell} \setminus \partial \dom.
 \end{equation}
 
 The area ${\cal C}_{\ibold}$ of the $\ibold$th coarse cell
   is also occupied by the union of fine cells
   that refine ${\cal C}_{\ibold}$.
   Hence the average of a function $\phi$ over ${\cal C}_{\ibold}$ 
   can either be represented by $\avg{\phi}_{\ibold}$
   in (\ref{eq:averagedValues})
   or the average of averages of $\phi$ over the fine cells.
 This potential ambiguity can be resolved
 by defining variables only on the \emph{valid region}
 \begin{equation}
   \label{eq:validRegion}
   \begin{array}{l}
   \Omega_{\valid}^{\ell} :=
   \left(\bigcup_{\Omega^{\ell}_k \in \Omega^{\ell}}
     \Omega^{\ell}_k\right)
     \setminus
     \Omega_{\invalid}^{\ell}, 
   \end{array}
 \end{equation}
 where $\Omega_{\invalid}^{\ell}:=
  \bigcup_{\Omega^{\ell+1}_k \in \Omega^{\ell+1}}
  \mathcal{C}_{r}(\Omega^{\ell+1}_k)$
  is the \emph{invalid region} of $\Omega^{\ell}$.
  In other words,
    the invalid region of the $\ell$th level
    contains and only contains cells
    that are refined in the $(\ell+1)$th level;
    see Figure \ref{fig:AMRHier}(d) for
    some examples of $\Omega_{\invalid}^{1}$ and $\Omega_{\valid}^{1}$.

An AMR hierarchy $\Omega_{\Upsilon}$ must satisfy
 the \emph{proper refinement condition},
\begin{equation}
  \label{eq:pBlock}
  \forall \ell>0,\ \forall \Omega^{\ell}_k\in \Omega^{\ell},
  \quad
  \Omega^{\ell}_k = \mathcal{C}^{-1}_{r}
  \left(\mathcal{C}_{r}(\Omega^{\ell}_k)\right), 
\end{equation}
 and the \emph{proper nesting condition} that, 
 along any direction, 
 there be at least one control volume in $\Omega^{\ell}$
 separating $\Omega_{\valid}^{\ell+1}$ from $\Omega_{\valid}^{\ell-1}$
 to prevent abrupt changes of any functions on $\Omega_{\valid}^{\ell}$; 
 see Figure \ref{fig:AMRHier}(a).

We define the \emph{composite data of a function $\phi$
  over an AMR hierarchy} as
\begin{equation}
  \label{eq:compositeVariable}
  \avg{\phi}^{\comp}_{\ell_{\text{lo}}}
  := \left\{ \avg{\phi}^{\ell}:
    \ell = \ell_{\text{lo}}, \ell_{\text{lo}}+1, \ldots, \ell_{\max}
  \right\}
\end{equation}
 where $\avg{\phi}^{\ell}$ denotes the \emph{data on the $\ell$th level},
 i.e., the array of averaged values of
 $\phi$ over cells in the $\ell$th level $\Omega^{\ell}$.
 In particular, we write
 $\avg{\phi}^{\comp}:=\avg{\phi}^{\comp}_{0}$.

\subsection{Data transfers between adjacent levels}
\label{sec:data-transf-betw}

The only natural application of
 transferring data on a fine level to the adjacent coarse level
 appears to be the
 replacement of the coarse level data by the fine level data, 
 \begin{equation}
   \label{eq:AMR_avergingDownToCrse}
   \forall \ibold \in 
   \mathcal{C}_r(\Omega^{\ell+1}), \quad
   \avg{\phi}^{\ell}_{\ibold}
   = \frac{1}{r^\Dim} \sum\nolimits_{\jbold\in\mathcal{C}^{-1}_r(\{\ibold\})}
   \avg{\phi}^{\ell+1}_{\jbold}.
 \end{equation}
The data of $\Omega^{\ell}$
 on the invalid region $\mathcal{C}_r(\Omega^{\ell+1})$
 is said to be \emph{redundant}
 if (\ref{eq:AMR_avergingDownToCrse}) holds. 

Conversely, 
 data transfer from a coarse level to its finer level is needed for
 \begin{itemize}
 \item initializing cell averages on newly refined grids,
 \item filling ghost cells along the coarse-fine interface $\partial\dom^{\ell}_I$ for each patch.
 \end{itemize}

These scenarios are handled by formulas of AMRCFI
 \cite{zhang11:_amrcf,zhang11:_high_order_multid_coars_fine},
 a family of algorithms for
 efficient, generic, and conservative coarse-fine interpolation (CFI)
 based on multi-dimensional polynomials.
AMRCFI is conservative in that
 the conservation constraint
 \eqref{eq:AMR_avergingDownToCrse}
 is satisfied exactly.
It is also efficient
 in that multi-dimensional polynomial interpolation is reduced
 to multiplying a predetermined matrix
 to the vector of cell averages on the coarse level,
 where the matrix depends only
 on $r$ and the relative positions of the fine cells
 to the coarse cells.
As such, no linear system is solved at the run time
 and the ill-conditioning of Vandermonde matrix is avoided.
The AMRCFI formulas are chosen to be fifth-order accurate
 for reasons similar to those in the last paragraph
 of \Cref{sec:spat-discr}.

The stencils for multi-dimensional polynomial reconstruction
 in AMRCFI
 are generated by a heuristic algorithm,
 whose validity can be rigorously proved
 by the concept of triangular lattices
 recently introduced in \cite{zhang2024PLG}.

\subsection{Composite operators over a hierarchy (COH)}
\label{sec:AMR_composite-operators}

Any discrete operator $\mathbf{Q}$
 that acts on level data, 
 such as those in \Cref{sec:spat-discr}, 
 can now be generalized to a \emph{composite operator}
 $\mathbf{Q}_{\ell_{\text{lo}}}^{\comp}$ 
 that acts on composite data
 $\avg{\phi}_{\ell_{\text{lo}}}^{\comp}$ as follows.  
 \begin{enumerate}[(\textrm{COH}-1)]
 \item For each patch $\Omega^{\ell}_k$ of each level
   $\ell=\ell_{\text{lo}},\ell_{\text{lo}}+1, \ldots, \ell_{\max}$,
 wrap up $\Omega^{\ell}_k$ with two layers of ghost cells
 and fill these ghost cells by AMRCFI
 or by copying data from boxes of the same level,
 or by the ghost-filling formulas (\ref{eq:velBCHomoDiri5th})
 and (\ref{eq:Neumann5}); 
 see Figure \ref{fig:AMRHier}(c).
\item Evaluate $\mathbf{Q}\avg{\phi}^{\ell}$ for each
  $\ell=\ell_{\text{lo}},\ell_{\text{lo}}+1, \ldots, \ell_{\max}$.
 \item If $\mathbf{Q}$ has a divergence form,
   perform refluxing \cite[Fig. 2.1]{zhang2012fourthAMR-ADE}
   for each level $\Omega^{\ell}$ where
   $\ell = \ell_{\max}-1, \ell_{\max}-2, \ldots, \ell_{\text{lo}}$.
 \item Use (\ref{eq:AMR_avergingDownToCrse})
   to average data on $\Omega^{\ell+1}$
   to $\mathcal{C}_r(\Omega^{\ell+1})$
   for each level $\Omega^{\ell}$ where
   $\ell=\ell_{\max}-1, \ell_{\max}-2, \ldots, \ell_{\text{lo}}$.
 \end{enumerate}

The steps (COH-1,2,3,4) generalize
 the discrete operators $\Grad$, $\Div$, and $\Lapl$
 in \eqref{eq:discreteOps}
 to composite operators
 $\Grad_{\ell_{\text{lo}}}^{\comp}$, 
 $\Div_{\ell_{\text{lo}}}^{\comp}$, 
 and $\Lapl_{\ell_{\text{lo}}}^{\comp}$, respectively. 
Consequently, we construct
 a \emph{composite projection} as 
 \begin{equation}
   \label{eq:comp_ProjOp}
   \Proj_{\ell_{\text{lo}}}^{\comp}
   := \Iden
   - \Grad_{\ell_{\text{lo}}}^{\comp}
   \left(\Lapl_{\ell_{\text{lo}}}^{\comp}\right)^{-1}
   \Div_{\ell_{\text{lo}}}^{\comp},
 \end{equation}
 where the inverse of $\Lapl_{\ell_{\text{lo}}}^{\comp}$ is implemented
 by an adaptive multigrid method \cite{zhang2012fourthAMR-ADE}.

For discrete composite projections 
 in second-order AMR 
 \cite{howell1997adaptive,Martin2008cellcenteredadaptiveprojection},
 \mbox{(COH-4)} is often employed as its last step; 
 then the projected composite velocity is always redundant
 even if the composite velocity is not.
 Our composite projection, however, 
 is different.

\begin{lemma}
  \label{lem:compositeProjection}
  For $\Proj_{\ell_{\text{lo}}}^{\comp}$ in (\ref{eq:comp_ProjOp}), 
  the projected velocity $\Proj_{\ell_{\text{lo}}}^{\comp}
  \avg{\mathbf{w}}_{\ell_{\text{lo}}}^{\comp}$ 
  satisfies (\ref{eq:AMR_avergingDownToCrse})
  if and only if the composite velocity
  $\avg{\mathbf{w}}_{\ell_{\text{lo}}}^{\comp}$
  satisfies (\ref{eq:AMR_avergingDownToCrse}).
\end{lemma}
\begin{proof}
  By \mbox{(COH-4)},
  $(\Iden-\Proj_{\ell_{\text{lo}}}^{\comp})
  \avg{\mathbf{w}}_{\ell_{\text{lo}}}^{\comp}$
   satisfies (\ref{eq:AMR_avergingDownToCrse}) anyway.
   Hence
   if a composite velocity
   $\avg{\mathbf{w}}_{\ell_{\text{lo}}}^{\comp}$
   satisfies (\ref{eq:AMR_avergingDownToCrse}),
   so does
   $\Proj_{\ell_{\text{lo}}}^{\comp}
   \avg{\mathbf{w}}_{\ell_{\text{lo}}}^{\comp}$; 
   if $\avg{\mathbf{w}}_{\ell_{\text{lo}}}^{\comp}$
   does not satisfy (\ref{eq:AMR_avergingDownToCrse}),
   nor does
   $\Proj_{\ell_{\text{lo}}}^{\comp}
   \avg{\mathbf{w}}_{\ell_{\text{lo}}}^{\comp}$. 
\end{proof}

We emphasize that,
 inspite of being a concatenation
 by three composite operators in (\ref{eq:comp_ProjOp}), 
 $\Proj_{\ell_{\text{lo}}}^{\comp}$
 is \emph{not} defined by (COH-1,2,3,4).
The distinguishing feature of $\Proj_{\ell_{\text{lo}}}^{\comp}$
 in \Cref{lem:compositeProjection} 
 is crucial 
 for our AMR method
 to achieve fourth-order accuracy;
 see also the last paragraph of \Cref{subsec:single-level-advance}.

\subsection{Regridding and data migration
  (RDM)}
\label{subsec:regridding}

The AMR hierarchy $\Omega_{\Upsilon}$
 may be regridded at any time step
 so that computational resources are focused on
 areas of primary interests. 

\begin{enumerate}[(RDM-1)]
\item For each $\ell=0,1,\ldots,\ell^{\max}\!-\!1$,
  tag cells on $\Omega^{\ell}$
  that satisfy user-supplied criteria such as
  $\|\nabla\times \mathbf{u}\| \geq \epsilon_{\omega}$
  where $\epsilon_{\omega}$ is a user-specified constant.
\item For level $\ell^{\max}-1$,
  group the tagged cells
  into disjoint boxes by the clustering algorithm in 
  \cite{berger1986Data_for_adaptive_grid,berger1991algorithm} 
  and refine these boxes
  to form a new grid $\Omega_{\text{new}}^{\ell^{\max}}$.
\item For each $\ell=\ell^{\max}\!-\!2, \ell^{\max}\!-\!3,\ldots,0$,
  coarsen the boxes in $\Omega_{\text{new}}^{\ell+2}$ twice,
  expand each coarsened box by one cell
  in each direction along each dimension, 
  and obtain a new set of tagged cells
  as the union of these expanded boxes and
  those tagged cells on $\Omega^{\ell}$ in (RDM-1).
  Then,
  apply operations in (RDM-2) to this new set of tagged cells
  to form the new grid $\Omega_{\text{new}}^{\ell+1}$.
\item The new levels obtained in (RDM-2,3) form the new AMR hierarchy.
\end{enumerate}

In (RDM-3), the set union guarantees (\ref{eq:embeddingOfSubdomain})
 while the coarsening, expansion, and refinement of boxes
 imply the proper nesting and refinement conditions.

After generating the new AMR hierarchy,
 we calculate the intersection of the two hierarchies
 and copy data on the common grids
 from the old hierarchy to the new one.
As for cells on the new hierarchy that are outside the common grids,
 we assign their data by AMRCFI
 with those on the old hierarchy as the interpolation source. 

\subsection{Synchronized AMR for INSE}
 \label{sec:an-amr-method-nonsubcycling}

Replace the discrete operators in \eqref{eq:GePUP-IMEX}
 with their composite counterparts, 
 employ the components discussed in the previous subsections, 
 and we obtain a synchronized adaptive method for INSE
 with Dirichlet conditions.
This method can be considered as an extension
 of our previous synchronized AMR algorithms 
 for solving the advection-diffusion equation
 \cite{zhang2012fourthAMR-ADE}
 and INSE with periodic boundary conditions
 \cite{Zhang2014fourthorderapproximateprojection}.
Since the same time step size is used for the entire hierarchy, 
 different levels of composite data
 are \emph{always synchronized at the same time}.

This synchronized AMR algorithm 
 is not the most efficient, 
 since the uniform time step size is restricted
 by numerical stability on the finest level.
Nonetheless, it is useful
 for jump-starting the subcycled AMR; 
 see \Cref{subsec:single-level-advance}.



\section{Algorithms}
\label{sec:algorithm}

In \Cref{sec:gepup-e-level},
 the GePUP-E formulation (\ref{eq:GePUP-E}) is adapted 
 to a single refinement level
 to facilitate subcycling in time. 
In \Cref{subsec:single-level-advance},
 we outline the subcycled AMR method for INSE, 
 with some of its major components
 detailed in the last three subsections.

\subsection{The GePUP-E formulation
  on a single refinement level}
\label{sec:gepup-e-level}

Hereafter we denote
 by $\wbold^{\ell} := \wbold|_{\overline{\dom^{\ell}}}$
 the restriction of a function $\wbold$
 to the $\ell$th subdomain $\dom^{\ell}$. 
\begin{defn}
  \label{def:GePUP-E-OneLevel}
  The \emph{GePUP-E formulation of INSE on the $\ell$th refinement
    level}
  is obtained from \eqref{eq:GePUP-E} by
  replacing $\dom$ and $\partial\dom$
  respectively with $\dom^{\ell}$ and $\partial\dom^{\ell}_{P}$,
  changing $\ubold,\wbold, q$
  respectively to $\ubold^{\ell},\wbold^\ell, q^\ell$, 
  and adding the boundary conditions,
\begin{subequations}
  \label{eq:GePUP-localized}
  \begin{alignat}{3}
    \wbold^{\ell} &
    = \wbold^{I}  
    & \ \mathrm{on}\  &\partial \dom^{\ell}_I, \\
    \ubold^\ell \cdot\nbold &= \ubold^I\cdot\nbold
    & \ \mathrm{on}\  &\partial \dom^{\ell}_I, \\
    \nbold\cdot \nabla q^{\ell} &= \nbold\cdot(\mathbf{g}^{\ell} -\ubold^{I}\cdot\nabla \ubold^{I}
    +\nu\lapl \ubold^{I})
    - \frac{\partial }{\partial t}(\mathbf{n}\cdot\wbold^{I})
    & \quad \ \mathrm{on}\  &\partial \dom^{\ell}_{I},
  \end{alignat}
\end{subequations}
where $\partial \dom^{\ell}_{I}$ and $\partial \dom^{\ell}_{P}$
 are defined in (\ref{eq:subdomainBdryType}),
 $\wbold^{I}$ is obtained
 by restricting the solution $\mathbf{w}$ of (\ref{eq:GePUP-E})
 to $\partial \dom^{\ell}_{I}$,
 and
 $\ubold^{I}:=\left.(\cProjLH\wbold)\right|_{\partial \dom^{\ell}_{I}}$. 
\end{defn}

(\ref{eq:subdomainBoundary}) and (\ref{eq:subdomainBdryType}) 
 imply $\partial \dom^{\ell}_{I}=\emptyset$ for $\ell=0$
 and $\partial \dom^{\ell}_{I}\ne \emptyset$ for $\ell>0$.
Thus \Cref{def:GePUP-E-OneLevel}
 reduces to \Cref{def:GePUP-E} in the case of $\ell=0$.
For $\ell>0$,
 the GePUP-E formulation in Definition \ref{def:GePUP-E-OneLevel}
 is not a standalone problem; 
 it depends on Definition \ref{def:GePUP-E} 
 since the interface conditions in (\ref{eq:GePUP-localized})
 come from the solution of (\ref{eq:GePUP-E}). 

Analogous to (\ref{eq:INSE-solvabilityC}),
 the formulation in \Cref{def:GePUP-E-OneLevel} satisfies,
 on each \emph{connected} component $\dom^{\ell,c}$
 of the $\ell$th subdomain, 
 the compatibility condition
 \begin{equation}
   \label{eq:GePUP-localized-solvability-condition}
   \int_{\partial\dom^{\ell,c}_I} \wbold^I \cdot \nbold 
   +
   \int_{\partial\dom^{\ell,c}_P} \mathbf{u}^b \cdot \nbold 
   = 0.
 \end{equation}

\begin{theorem}
  \label{thm:localHeatEqOfDivW}
  The evolution of $\nabla\cdot\mathbf{w}^{\ell}$
  of the GePUP-E formulation for a single refinement level
  in \Cref{def:GePUP-E-OneLevel}
  is governed by
  \begin{subequations}
    \label{eq:localHeatEqOfDivW}
    \begin{align}
      \frac{\partial\left(\nabla\cdot \mathbf{w}^{\ell}\right)}{\partial t}
      &= \nu\Delta\left(\nabla\cdot \mathbf{w}^{\ell}\right)
      & \ \mathrm{in}\  \dom^\ell, \\
      \mathbf{n}\cdot\nabla\nabla\cdot \mathbf{w}^{\ell} &= 0
      & \ \mathrm{on}\  \partial \dom^{\ell}_I, \\
      \nabla\cdot \mathbf{w}^{\ell} &= 0
      & \ \mathrm{on}\  \partial \dom^{\ell}_P.
    \end{align}
  \end{subequations}
\end{theorem}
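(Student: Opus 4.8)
The plan is to regard $\psi^\ell:=\nabla\cdot\wbold^\ell$ as the unknown and to establish, one at a time, the interior evolution equation, the condition on the physical boundary $\partial\dom^\ell_P$, and the condition on the coarse-fine interface $\partial\dom^\ell_I$. The first two parallel the proof of \Cref{thm:decayOfVelDiv-GePUP-E}; the third is the genuinely new ingredient and carries all the difficulty.

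For the interior equation I would take the divergence of the momentum equation (\ref{eq:GePUP-E}a), restricted to $\dom^\ell$, to get
\begin{equation*}
  \frac{\partial\psi^\ell}{\partial t}
  = \nabla\cdot\mathbf{g}
  - \nabla\cdot(\ubold^\ell\cdot\nabla\ubold^\ell)
  - \Delta q^\ell + \nu\Delta\psi^\ell ,
\end{equation*}
and then cancel the first three terms using the pressure Poisson equation (\ref{eq:GePUP-E}e), namely $\Delta q^\ell=\nabla\cdot(\mathbf{g}-\ubold^\ell\cdot\nabla\ubold^\ell)$. This yields (\ref{eq:localHeatEqOfDivW}a). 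The Dirichlet condition (\ref{eq:localHeatEqOfDivW}c) is immediate, because on $\partial\dom^\ell_P\subseteq\partial\dom$ the level formulation inherits from (\ref{eq:GePUP-E}b) the constraint $\nabla\cdot\wbold^\ell=0$.

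The substance is the Neumann condition (\ref{eq:localHeatEqOfDivW}b). I would first recognize that the engineered interface condition (\ref{eq:GePUP-localized}c) is exactly the normal derivative of the global pressure $q$ of (\ref{eq:GePUP-E}) on $\partial\dom^\ell_I$: writing the normal component of the global momentum equation (\ref{eq:GePUP-E}a) and using that the global solution is divergence-free (\Cref{thm:decayOfVelDiv-GePUP-E}, which forces $\nbold\cdot\Delta\wbold=\nbold\cdot\Delta\ubold$ there) reproduces precisely the right-hand side of (\ref{eq:GePUP-localized}c). I would then take the normal component of the level momentum equation for $\wbold^\ell$ on $\partial\dom^\ell_I$, use the interface Dirichlet condition (\ref{eq:GePUP-localized}a) to rewrite $\nbold\cdot\partial_t\wbold^\ell$ as $\partial_t(\nbold\cdot\wbold^I)$, and substitute (\ref{eq:GePUP-localized}c). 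The force and time-derivative terms cancel, reducing the claim to
\begin{equation*}
  \nu\,\nbold\cdot\bigl(\Delta\wbold^\ell-\Delta\ubold^I\bigr)
  = \nbold\cdot(\ubold^\ell\cdot\nabla\ubold^\ell)
  - \nbold\cdot(\ubold^I\cdot\nabla\ubold^I)
  \quad\text{on }\partial\dom^\ell_I .
\end{equation*}
Because $\partial\dom^\ell_I$ is a union of axis-aligned cell faces, $\nbold$ is locally a fixed coordinate direction and the interface is flat, so I can decompose $\nbold\cdot\Delta(\cdot)$ into $\partial(\nabla\cdot\,\cdot)/\partial n$ plus a curl-type surface term; since $\ubold^I$ is the trace of the divergence-free $\cProjLH\wbold$, the left-hand side becomes $\nu\,\partial\psi^\ell/\partial n$ plus residual terms.

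The hard part is showing that these residuals vanish. They consist of the difference of the curl-type surface terms and the difference of the convective fluxes, both of which depend on normal derivatives of $\wbold^\ell$ and on the tangential components of $\ubold^\ell$ — quantities that (\ref{eq:GePUP-localized}a,b) do not fix pointwise, so no purely local estimate suffices. I expect the clean way to close the argument is to note that (\ref{eq:GePUP-localized}) is designed to be consistent with the restriction of the global GePUP-E solution: that restriction satisfies the momentum equation and PPE on $\dom^\ell\subset\dom$, inherits the physical-boundary data, matches the normal velocity on the interface, and satisfies (\ref{eq:GePUP-localized}c) since the latter equals $\partial q/\partial n$; hence the level fields coincide with the traces of the (divergence-free) global field and the residuals cancel, leaving the homogeneous Neumann condition. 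Verifying this cancellation — in particular that the nonlinear convective residual and the viscous curl residual annihilate each other rather than merely being individually bounded — is where the precise form of (\ref{eq:GePUP-localized}c) is indispensable, and I anticipate it to be the most delicate step of the proof.
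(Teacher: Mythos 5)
Your three-part structure --- the interior equation from the divergence of (\ref{eq:GePUP-E}a) plus the PPE (\ref{eq:GePUP-E}e), the Dirichlet condition from (\ref{eq:GePUP-E}b), and the interface condition from the normal component of the momentum equation combined with (\ref{eq:GePUP-localized}c) --- is exactly the paper's, and your reduction of (\ref{eq:localHeatEqOfDivW}b) to
\begin{equation*}
  \nu\,\nbold\cdot\bigl(\lapl\wbold^{\ell}-\lapl\ubold^{I}\bigr)
  = \nbold\cdot(\ubold^{\ell}\cdot\grad\ubold^{\ell})
  - \nbold\cdot(\ubold^{I}\cdot\grad\ubold^{I})
  \quad\text{on } \partial\dom^{\ell}_{I}
\end{equation*}
is the correct intermediate step. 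The gap is in what you do next: you defer the closing cancellation as ``the most delicate step'' and predict the wrong structure for it, anticipating that a nonlinear convective residual must annihilate a viscous curl residual. In the paper this step is a one-line algebraic identity, not an analytic cancellation. Since $\wbold^{\ell}=\ubold^{\ell}-\grad\phi$ (\Cref{def:GePUP-E}), $\grad$ commutes with $\lapl$, and $\div\ubold^{\ell}=0$ by (\ref{eq:INS}b), one has
\begin{equation*}
  \lapl\wbold^{\ell}-\grad\div\wbold^{\ell}=\lapl\ubold^{\ell}
  \qquad\text{in } \dom^{\ell},
\end{equation*}
pointwise and with no reference to the interface geometry, so your appeal to flatness of the cell faces is unnecessary. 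Equivalently, in your curl language: the curl residual is $\nabla\times\nabla\times(\wbold^{\ell}-\ubold^{\ell})=\nabla\times\nabla\times(-\grad\phi)=\zerobold$ identically, because $\wbold^{\ell}-\ubold^{\ell}$ is a gradient. Combining this with the identification you correctly invoke --- the level fields are restrictions of the global solution, since the interface data $\wbold^{I}$, $\ubold^{I}$ in \Cref{def:GePUP-E-OneLevel} are by definition traces of that solution, whence $\ubold^{\ell}=\ubold^{I}$ and $\lapl\ubold^{\ell}=\lapl\ubold^{I}$ on $\partial\dom^{\ell}_{I}$ --- the convective difference above vanishes identically on its own, and the viscous difference equals $\nu\,\nbold\cdot\grad\div\wbold^{\ell}$ exactly. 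The two residuals never interact, and (\ref{eq:localHeatEqOfDivW}b) follows.

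A further caution: you justify reading (\ref{eq:GePUP-localized}c) as $\partial q/\partial n$ of the global solution by invoking $\div\wbold=0$ globally (via \Cref{thm:decayOfVelDiv-GePUP-E}). Keep that strictly as motivation; it must not enter the derivation. If the proof assumed the global $\wbold$ to be exactly solenoidal, then $\div\wbold^{\ell}\equiv 0$ and (\ref{eq:localHeatEqOfDivW}) would hold vacuously --- the theorem's whole point, as with \Cref{thm:decayOfVelDiv-GePUP-E}, is to exhibit the dynamics that damp a \emph{nonzero} divergence arising in computation. The identity above is precisely what lets the derivation proceed without that assumption, and it also explains why the specific choice of $\nu\lapl\ubold^{I}$ (rather than $\nu\lapl\wbold^{I}$) in (\ref{eq:GePUP-localized}c) is indispensable, confirming your intuition on that point: with $\nu\lapl\wbold^{I}$ the same manipulation would collapse to the vacuous identity $0=0$ instead of producing the homogeneous Neumann condition.
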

\begin{proof}
  (\ref{eq:GePUP-E}e) and the divergence of (\ref{eq:GePUP-E}a)
  imply (\ref{eq:localHeatEqOfDivW}a). 
  $\wbold = \ubold - \grad \phi$ in Definition \ref{def:GePUP-E},
  the commutativity of $\grad$ and $\lapl$, 
  and (\ref{eq:INS}b) yield the identity
  $\lapl\wbold - \grad\div\wbold = \lapl\ubold$, 
  which, together with (\ref{eq:GePUP-localized}c)
  and the normal component of (\ref{eq:GePUP-E}a),
  leads to (\ref{eq:localHeatEqOfDivW}b).
  The Dirichlet condition (\ref{eq:localHeatEqOfDivW}c)
  follows directly from (\ref{eq:GePUP-E}b)
  and is vacuous if $\partial \dom^{\ell}_P=\emptyset$.
\end{proof}

By the maximum principle of the heat equation,
 (\ref{eq:localHeatEqOfDivW}) dictates
 the exponential decay of $\div \wbold^\ell$ on $\dom^{\ell}$,
 a feature similar to that in Theorem \ref{thm:decayOfVelDiv-GePUP-E}. 

 \begin{algorithm}
   \caption{A subcycled AMR method for solving INSE}
\label{alg:time-stepping_procedure}
\hspace*{\algorithmicindent}
\textbf{Input:} Initial time $t_0$, end time $t_e$, time step size $k^0$,
initial AMR hierarchy $\Omega_{\Upsilon}$, 
\\\hspace*{53.5pt}composite velocity $\avg{\wbold}^{\comp}$ that
approximates $\avg{\wbold(t_0)}^{\comp}$\\
\hspace*{\algorithmicindent}
\textbf{Side effect:} Update $\Omega_{\Upsilon}$
and $\langle \mathbf{w}\rangle^{\mathrm{comp}}$ to time $t_e$
\begin{algorithmic}[1]
  \State{Advance $\avg{\wbold}^{\comp}$ to time $t_0 + 3k^0$
    by synchronized AMR with time step size $\dt^{\ell_{\max}}$}
    \Comment{See \Cref{sec:an-amr-method-nonsubcycling}}
  \State{$\forall \ell = 0,1,\ldots,\ell_{\max}$,
    set $k^\ell := k^0/r^{\ell}$,\  
    $t^\ell \gets t_0 + 3k^0 $, and 
    $\mathbf{W}^\ell\leftarrow \mathbf{W}^\ell(t^{\ell})$
    in (\ref{eq:vel_slicing})
  }
  \While{$t^0<t_e$}
  \If{the regridding criteria are satisfied}
  \Comment{See \Cref{subsec:regridding}}
  \State{Regrid 
    and migrate $\avg{\wbold}^{\comp}$ and $\mathbf{W}^{\ell}$
    to the new hierarchy $\Omega_{\Upsilon}$}
  \EndIf
  \State{\Call{\texttt{SingleLevelAdvance}}
    {$0, \{t^m: m\ge 0\}, \langle \mathbf{w}\rangle^{\mathrm{comp}}$}}
  \EndWhile
\end{algorithmic}

\small
\quad\textbf{procedure} \texttt{\textsc{SingleLevelAdvance}} 
($\ell$, $\{t^m: m\ge{\ell}\}$, 
$\langle \mathbf{w}\rangle^{\mathrm{comp}}$)\\
  \hspace*{\algorithmicindent}
  \textbf{Side effect:}
  The composite data $\langle \mathbf{w}\rangle^{\comp}_{\ell}$
  in (\ref{eq:compositeVariable}) 
  is advanced to $t^{\ell}_{\mathrm{sync}}:=t^\ell + \dt^\ell$;
  \\\hspace*{70pt} 
  for each level $m\ge \ell$,
  the current time $t^{m}$ is updated to $t^{\ell}_{\mathrm{sync}}$
\begin{algorithmic}[1]
  \If{$\ell > 0$} \Comment{$\partial\dom^{\ell}_I=\emptyset$
    for the coarsest level}
  \State{Compute, for $\dom^{\ell}$, the interface conditions
    in (\ref{eq:GePUP-localized})}
  \Comment{See \Cref{subsec:compute-coarse-fine}}
  \State{Enforce solvability conditions
    such as (\ref{eq:GePUP-localized-solvability-condition})}
  \Comment{See \Cref{sec:enforc-comp-cond}}
  \EndIf
  \State{Advance $\langle{\wbold}\rangle^{\ell}$
    to $t^{\ell}_{\mathrm{sync}}$
    by IMEX in \eqref{eq:GePUP-IMEX} with time step size
      $\dt^{\ell}$}
  \Comment{See \Cref{subsec:line-solv-irreg}}
  \State{Set $t^{\ell} \gets t^{\ell}_{\mathrm{sync}}$
    and $\mathbf{W}^\ell\gets
    \mathbf{W}^\ell(t^{\ell}_{\mathrm{sync}})$}
  \Comment{See (\ref{eq:vel_slicing})}
  \If{$\ell<\ell_{\max}$}
  \While{$t^{\ell+1}< t^{\ell}_{\mathrm{sync}}$}
  \State{\Call{\texttt{SingleLevelAdvance}}
    {$\ell+1$, $\{t^m: m\ge \ell+1\}$, 
      $\langle \mathbf{w}\rangle^{\mathrm{comp}}$}}
  \Comment{The recursion}
  \EndWhile
  \State{Set $\langle{\wbold}\rangle^{\comp}_{\ell}
    \leftarrow \Proj_{\ell}^{\comp} \langle{\wbold}\rangle^{\comp}_{\ell}$
    with $\Proj_{\ell}^{\comp}$ in (\ref{eq:comp_ProjOp})}
  \Comment{The synchronization}
  \EndIf
\end{algorithmic}



 \end{algorithm}
 \begin{figure}
   \label{fig:coarseFineInterface-Interp}
   \centering
   \subfigure[Vary time step sizes across levels
   in $\Omega_{\Upsilon}$
   with $k^{\ell}=r k^{\ell+1}$]
   {
     \includegraphics[scale=0.36]{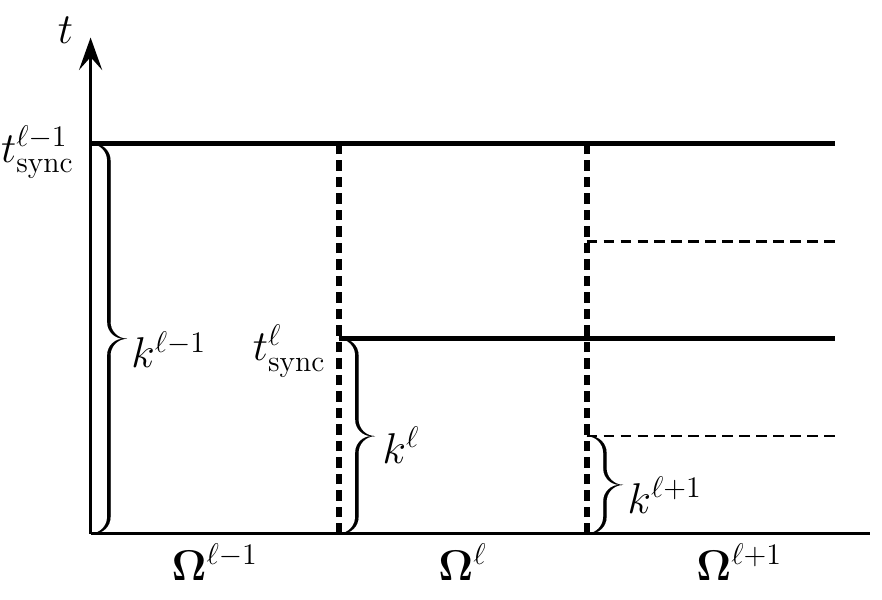}
   }
   \hfill
   \subfigure[Compute interface conditions on $\partial \dom^{\ell}_I$
   in (\ref{eq:GePUP-localized})
   from cell averages in $\Omega^{\ell-1}$ 
   ]
   {
     \includegraphics[scale=0.62]{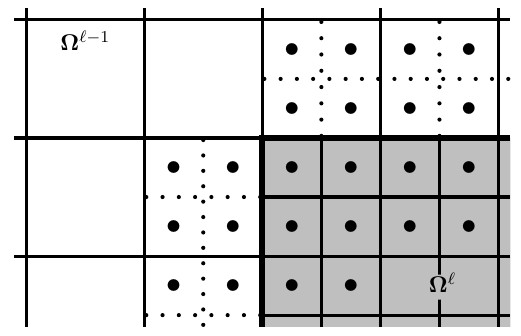}
   }
   \\
   \subfigure[Interpolate 
   interface conditions in (\ref{eq:GePUP-localized})
   on $\partial \dom^{\ell}_I$ at $t^*$
   from data at $t_{-i}:=t^{\ell-1}-ik^{\ell-1}$]
   {
     \includegraphics[scale=0.95]{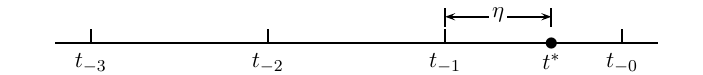}
   }
   \caption{Key components of subcycled AMR.
     In (a),
     a vertical dashed line represents a coarse-fine interface,
     and a horizontal thick solid line indicates 
     a time of synchronization. 
     In (b),
     the shaded area represents $\dom^{\ell}$, 
     thick solid lines 
     $\partial \dom^{\ell}_I$ in (\ref{eq:subdomainBdryType}),
     and ``$\bullet$'' a cell average near $\partial \dom^{\ell}_I$
     for calculating face averages
     on $\partial \dom^{\ell}_I$. 
     In (c), interpolation source data
     are prepared at $t_{-i}$ where $i=0,1,2,3$.
     }
 \end{figure}

\subsection{Subcycled AMR for INSE} 
\label{subsec:single-level-advance}

Different from the synchronized AMR
 in \Cref{sec:an-amr-method-nonsubcycling}, 
 the subcycled AMR consists of
 a sequence of recursive single-level advances,
 with the time step size $k^{\ell}$ dependent on the level index $\ell$.
As illustrated in \Cref{fig:coarseFineInterface-Interp}(a), 
 two adjacent levels satisfy $k^{\ell}= r k^{\ell+1}$
 where $r$ is the spatial refinement ratio
 introduced in the first paragraph
 of \Cref{sec:amr-hierarchy}.
Since the time step size of a coarse level
 is proportionally larger than that of its finer level,
 the efficiency of subcycled AMR
 is better than that of synchronized AMR.
 
Our subcycled AMR method is outlined 
 in \Cref{alg:time-stepping_procedure}. 
At line 1, the synchronized AMR in \Cref{sec:an-amr-method-nonsubcycling}
 is employed to advance the composite velocity $\avg{\wbold}^{\comp}$
 from $t_0$ to $t_0+3k^0$ 
 with the time step size $\dt^{\ell_{\max}}=k^0/r^{\ell_{\max}}$.
These results are used at line 2 to initialize
 $\mathbf{W}^{\ell}(t_0+3k^0)$
 where the set $\mathbf{W}^{\ell}(\tau)$ is defined as 
 \begin{equation}
   \label{eq:vel_slicing}
   \forall \ell = 0,1, \ldots, \ell_{\max}-1,\ \ 
   \mathbf{W}^{\ell}(\tau) := \left\{ \avg{\wbold(t)}^{\ell}:
     t = \tau-i\dt^{\ell} \text{ where } i=0,1,2,3
   \right\},  
 \end{equation}
 which serves as the source data for temporal interpolation
 in computing the coarse-fine interface conditions
 in (\ref{eq:GePUP-localized}). 
Note that, at line 3, $t^0$ is the variable representing
 the current time of $\avg{\mathbf{w}}$ on the coarsest level
 and is different from $t_0$,
 the constant input parameter that denotes the initial time.

Given the velocity $\avg{\wbold}^{\comp}$
 and a base level index $\ell$,
 the procedure \textsc{SingleLevelAdvance}
 advances each level in $\avg{\wbold}^{\comp}_{\ell}$
 from $t^{\ell}$ to $t^{\ell}_{\mathrm{sync}}:=t^{\ell} + \dt^{\ell}$
 as follows. 
First, the velocity $\avg{\wbold}^{\ell}$ of the $\ell$th level
 is advanced to $t^{\ell}_{\mathrm{sync}}$ 
 in one time step, 
 with lines 2, 3, 5 detailed in Subsections
 \ref{subsec:compute-coarse-fine},
 \ref{sec:enforc-comp-cond},
 and \ref{subsec:line-solv-irreg}, respectively.
Then each level finer than $\ell$ is recursively advanced
 by lines 8--10 to $t^{\ell}_{\mathrm{sync}}$
 in multiple time steps. 
Finally at line 11,
 the level velocities in $\avg{\wbold}^{\comp}_{\ell}$
 are synchronized 
 by the composite projection in (\ref{eq:comp_ProjOp}).
 
Due to \Cref{lem:compositeProjection}
 and the decoupling of level advances, 
 neither at line 12 
 nor at line 10
 does the velocity 
 on the invalid region of a coarse level
 satisfy the redundancy condition
 in (\ref{eq:AMR_avergingDownToCrse}). 
Hence
 the synchronization should be interpreted
 not in the sense of redundancy 
 but with respect to the consistency
 of fulfilling the divergence-free condition across multiple levels.
It is emphasized that
 we never apply (COH-4) in \Cref{sec:AMR_composite-operators}
 to $\avg{\mathbf{w}}^{\comp}_{\ell}$
 after line 12.  
This makes our AMR method prominently different
 from second-order AMR methods
 \cite{howell1997adaptive,Martin2008cellcenteredadaptiveprojection},
 in which (COH-4) is applied as an extra step
 to $\Proj_{\ell}^{\comp}\avg{\mathbf{w}}^{\comp}_{\ell}$.
As demonstrated in \Cref{subsec:test-viscous-box},
 applying this extra step 
 to $\Proj_{\ell}^{\comp}\avg{\mathbf{w}}^{\comp}_{\ell}$
 reduces our AMR method to third-order accuracy
 with dominating solution errors concentrated
 at the coarse-fine interface. 
In comparison, 
 adhering to the formula (\ref{eq:comp_ProjOp})
 yields a better continuity
 of the coarse velocity at the coarse-fine interface
 and recovers fourth-order convergence rates of our method.
Therefore,
 (\ref{eq:comp_ProjOp}) without (COH-4)
 is the suitable form of discrete composite projection
 for subcycled AMR. 

\subsection{Computing coarse-fine interface conditions
 (CIC) in (\ref{eq:GePUP-localized})}
 \label{subsec:compute-coarse-fine}

Referring to \Cref{fig:coarseFineInterface-Interp}(c), 
 we write Talor expansions of a function
 $\varphi$ and $\frac{\dif\varphi}{\dif t}$ at $t_{-i}$'s as
 \begin{equation}
   \label{eq:timeInterp}
   \left[
     \begin{matrix}
       \varphi(t^*), 
       k\frac{\dif\varphi}{\dif t}(t^*)
     \end{matrix}
   \right]^{\top}
   =
   \mathbf{I}^{(4)}(k,\eta)
   \left[
     \begin{matrix}
       \varphi(t_{-0}), 
       \varphi(t_{-1}), 
       \varphi(t_{-2}), 
       \varphi(t_{-3})
     \end{matrix}
   \right]^{\top} + O(k^4),
 \end{equation}
 where $k:=k^{\ell-1}$, $\eta:=t^*-t_{-1}$, 
 and the fourth-order interpolation matrix
 $\mathbf{I}^{(4)}(k,\eta)$ is
 \begin{displaymath}
   \small
   \begin{aligned}
  \renewcommand{\arraystretch}{1.2}
  \mathbf{I}^{(4)}(k,\eta) \!=\!
  \frac{1}{6k^3}\!
  \left[
    \begin{matrix}
      \!-\eta^3 \!+\! k^2\eta & 3\eta^3\!+\!3k \eta^2\!-\!6k^2\eta & \!-3\eta^3\!-\!6k \eta^2\!+\!3k^2\eta\!+\!6k^3 & \eta^3 \!+\! 3k \eta^2 \!+\!2k^2\eta \\
      \!-3\eta^2\!+\!k^2 & 9\eta^2\!+\!6k \eta\!-\!6k^2    & \!-9\eta^2\!-\!12k \eta \!+\! 3k^2     & 3\eta^2\!+\!6k \eta\!+\!2k^2      \\
    \end{matrix}
  \right]
\end{aligned}
.
 \end{displaymath}

To prepare for the advance of $\avg{\wbold}^{\ell}$
 in \Cref{def:GePUP-E-OneLevel}
 by the IMEX scheme in \eqref{eq:GePUP-IMEX},
 we compute face averages of (\ref{eq:GePUP-localized})
 at the coarse-fine interface $\partial \dom^{\ell}_I$ as follows.
\begin{enumerate}[({CIC}-1)]
\item Locate, for $\partial \dom^{\ell}_I$,
  two layers of nearby fine cells in $\Omega^{\ell}$;
  see Figure \ref{fig:coarseFineInterface-Interp}(b).
\item Let $\phi$ denote
  $\wbold$, $\ubold$, $\ubold\cdot\grad\ubold$, and $\lapl \ubold$. 
  For each $i=0,1,2,3$,
  evaluate coarse cell averages  
  $\avg{\phi(t^{\ell-1}_{-i})}^{\ell-1}_{\ibold}$
  by applying the discrete operators in \Cref{sec:spat-discr}
  to $\avg{\wbold(t^{\ell-1}_{-i})}^{\ell-1}_{\ibold}$ in
  $\mathbf{W}^{\ell-1}$,
  use AMRCFI to calculate fine cell averages
  $\avg{\phi(t^{\ell-1}_{-i})}^{\ell}_{\ibold}$
  from
  $\avg{\phi(t^{\ell-1}_{-i})}^{\ell-1}_{\ibold}$,
  and use \eqref{eq:cell2face} to 
  convert fine cell averages to face averages
  on the interface.
\item For each stage $s=2,3, \ldots,\nStages$
  in (\ref{eq:GePUP-IMEX}),
  use (\ref{eq:timeInterp}) to obtain face averages
  $\avg{\phi}^{\ell}_{\ipmfed{1}}$ and
  $\avg{\frac{\partial \wbold}{\partial t}}^{\ell}_{\ipmfed{1}}$
  at  $t^*:= t^\ell + c_s\dt^{\ell}$,
  and calculate the right-hand side (RHS) of (\ref{eq:GePUP-localized}),
  where $c_s$ is the same as that below (\ref{eq:GePUP-IMEX}).
\end{enumerate} 

For a time integrator with dense output
\cite{mccorquodale2011high,christopher2022high}, 
 $\mathbf{W}^{\ell}$ can be removed as unnecessary.
However,
 the inclusion of $\mathbf{W}^{\ell}$
 frees the subcycled AMR 
 from being restricted
 to time integrators with dense output,
 leading to a better generality of our method.
 
\subsection{Enforcing solvability conditions}
\label{sec:enforc-comp-cond}

There is no truncation error in
 the following discrete version of the compatibility condition
 in (\ref{eq:GePUP-localized-solvability-condition}),
 \begin{equation}
   \label{eq:velocity_disc_solvabilityCond}
   \sum_{\ipmfed{1} \in \partial \dom^{\ell,c}_I}
   \avg{\nbold\cdot\wbold^I}^{\ell}_{\ipmfed{1}}
   +
   \sum_{\ipmfed{1} \in \partial \dom^{\ell,c}_P}
   \avg{\nbold\cdot\mathbf{u}^b}^{\ell}_{\ipmfed{1}}
   = 0,  
 \end{equation}
 which is also the solvability condition
 for $\avg{\mathbf{u}}=\mathbf{P}\avg{\mathbf{w}}$
 in (\ref{eq:GePUP-IMEX}) on each connected region
 of the $\ell$th subdomain.
 
When we approximate $\avg{\nbold\cdot\wbold^I}^{\ell}_{\ipmfed{1}}$
 in (\ref{eq:velocity_disc_solvabilityCond})
 by the face average
 $\avg{\mathbf{n}\cdot\mathbf{w}^I}^{\ell,*}_{\ipmfed{1}}$
 calculated from (CIC-1,2,3), 
 we incur a finite error, albeit small, 
 to the solvability condition
 of $\avg{\mathbf{u}}=\mathbf{P}\avg{\mathbf{w}}$.
To annihilate this error,
we correct the approximate face averages by
 $\avg{\mathbf{n}\cdot\mathbf{w}^I}^{\ell,c}_{\ipmfed{1}}
 = \avg{\mathbf{n}\cdot\mathbf{w}^I}^{\ell,*}_{\ipmfed{1}}
 +\delta^c_{\mathbf{w}}$
 where $\delta^c_{\mathbf{w}}$ satisfies
\begin{equation}
  \label{eq:correctingFaceAvg}
  -\delta^c_{\mathbf{w}} |\partial\dom^{\ell,c}_I| =
  \sum_{\ipmfed{1} \in \partial \dom^{\ell,c}_I}
  \avg{\nbold\cdot\wbold^I}^{\ell,*}_{\ipmfed{1}}
  +
  \sum_{\ipmfed{1} \in \partial \dom^{\ell,c}_P}
  \avg{\nbold\cdot\mathbf{u}^b}^{\ell}_{\ipmfed{1}}
\end{equation}
 and $|\partial\dom^{\ell,c}_I|$ is the number of faces
 that partition $\partial\dom^{\ell,c}_I$.

Similar corrections are applied to
 $\avg{\mathbf{n}\cdot\mathbf{u}^I}^{\ell,*}_{\ipmfed{1}}$
 in (\ref{eq:GePUP-localized}b)
 and the RHS of (\ref{eq:GePUP-localized}c)
 to enforce the divergence-free condition (\ref{eq:INS}b)
 and the solvability condition
 of the discrete PPE in (\ref{eq:GePUP-IMEX}),
 respectively. 
 
\subsection{Geometric multigrid for
 three types of linear systems (TLS)}
\label{subsec:line-solv-irreg}

At line 5 of \textsc{SingleLevelAdvance}
 in \Cref{alg:time-stepping_procedure}, 
 we have to solve three types of linear systems
 in (\ref{eq:GePUP-IMEX}): 
 \begin{enumerate}[(TLS-1)]
 \item discrete PPEs for $\avg{q}$
   with pure Neumann boundary conditions,
 \item Helmholtz-like linear systems for $\avg{\mathbf{w}}$, 
 \item equations of the form
   $\mathbf{L}\avg{\mathbf{u}}=\mathbf{D}\avg{\mathbf{w}}$
   to implement the discrete projection. 
 \end{enumerate}

All the above linear equations are 
 solved by a geometric multigrid method \cite{briggs2000multigrid}, 
 with injection as the restriction operator,
 linear interpolation as the prolongation operator,
 and the weighted Jacobi ($\omega = \frac{2}{3}$) as the smoother.

The weighted Jacobi also serves
 as an excellent bottom solver for (TLS-2), 
 since the strict diagonal dominance of the matrices implies
 that the spectral radius of the iteration matrix
 is much smaller than one.
However, for \mbox{(TLS-1)} and (TLS-3), 
 the weighted Jacobi is no longer a good bottom solver
 because the spectral radius of the iteration matrix
 is close to one.
In addition,
 the dynamic nature of AMR may
 create a refinement level whose constituting boxes
 have complex adjacency
 and unusual shapes such as a long strip.
In these cases, 
 we switch the bottom solver 
 to the ILU-preconditioned GMRES($m$) method
 \cite[Chapter 9]{saad2003iterative}
 and obtain an optimal complexity 
 of the corresponding geometric multigrid method.
 
As shown in Figure \ref{fig:mg_convergenceRate},
 the reduction rates of one V-cycle
 are around $10$ and $10^4$
 for (TLS-1,3) and (TLS-2), respectively.
The high efficiency in the case of (TLS-1,3)
 is due to the following reasons.
First, as part of the bottom solver,
 the ILU factorization is \emph{only} applied to the linear system
 associated with the coarsest multigrid level. 
Second, the preconditioning matrix derived in the ILU factorization
 well approximates the coefficient matrix of linear system
 and yields faster GMRES convergence.
In our tests,
 only three to five GMRES iterations are required to
 boost the residual reduction rates to around 10.
Last, after each regridding,
 the ILU factorization is executed only once,
 and the resulting triangular matrices are stored and reused
 in GMRES iterations.
Then each preconditioning only entails
 a matrix-vector multiplication, 
 which is highly efficient
 because the matrices resulting from the ILU factorization 
 inherit the sparsity of the original matrix.



\section{Numerical Tests}
\label{sec:numerical-tests}

In this section, we perform a number of benchmark tests
 to demonstrate the accuracy and efficiency
 of the proposed AMR method.
We calculate cell averages of $\mathbf{u}_0$
 with a sixth-order Newton-Cotes quadrature formula, 
 set $\avg{\mathbf{w}_0} = \avg{\mathbf{u}_0}$, 
 and advance $\avg{\mathbf{w}}$
 from initial time $t_0$ to the final time $t_e$ with
 a specified Courant number
 Cr := $\frac{k}{h}\|\mathbf{u}\|_{\infty}$. 

Based on valid regions in (\ref{eq:validRegion}), 
 the \emph{$L_p$-norm of composite data} is defined as
 \begin{equation}
   \left\|\avg{\varphi}^{\comp} \right\|_{p}
   := \left\{
   \begin{array}{ll}
     \max_{\ell\in[0,\ell_{\max}]}
     \max_{\ibold\in\Omega^\ell_{\mathrm{valid}}}
     \left|\avg{\varphi}^{\ell}_{\ibold}\right|
     & \text{if } p=\infty;
     \\
     \left(
     \sum_{\ell\in[0,\ell_{\max}]} (h^{\ell})^{\Dim}
     \sum_{\ibold\in\Omega^\ell_{\mathrm{valid}}}
     |\avg{\varphi}^{\ell}_{\ibold}|^p 
     \right)^{\frac{1}{p}}
     & \text{if } p\in \mathbb{N}^+, 
   \end{array}
   \right.
 \end{equation}
 where $|\avg{\varphi}^{\ell}_{\ibold}|$ is 
 the absolute value of the cell average of $\varphi$
 over the $\ibold$th control volume of the $\ell$th level.

If the exact solution of a test is available, 
 we calculate composite errors
 by subtracting the computed solution from the exact solution,
 compute the $L_p$ norm of composite errors
 by (\ref{eq:compositeErrorDynamic}), 
 and deduce the corresponding convergence rates
 from the $L_p$ norms on successively refined AMR hierarchies.

When no exact solution is available for a test, 
 we determine composite solution errors as follows.
If $\ell_{\max}=0$, i.e.,
 each AMR hierarchy contains only one level,
 solution errors are obtained
 by standard Richardson extrapolation. 
If $\ell_{\max}\ge 1$
 and the set of subdomains is \emph{static}, 
 i.e.,
 $\{\dom^{\ell}: \ell=1,\ldots,\ell_{\max}\}$ does not depend on time,
 we perform the test on three or more AMR hierarchies
 with the same set of subdomains,
 the same refinement ratio, 
 and successively refined base levels; 
 then the \emph{composite solution error
   between any pair of adjacent AMR hierarchies}
 is the composite data on the coarse AMR hierarchy
 in which each level of errors
 is obtained by Richardson extrapolation
 on the two corresponding levels of solutions. 
 
A more common scenario of AMR is described by $\ell_{\max}\ge 1$
 and the set 
 of subdomains being \emph{dynamic}: 
 each subdomain $\dom^{\ell}$ changes on the fly at the runtime
 and thus may vary from one AMR hierarchy to another.
In this case,
 we perform the test on a very fine single-level grid
 and denote its solution as $\avg{\phi}_{\mathrm{ref}}$;
 then the composite error of the solution $\avg{\phi}^{\comp}$
 on an AMR hierarchy is defined by 
 \begin{equation}
   \label{eq:compositeErrorDynamic}
   \avg{e_{\phi}}^{\comp}
   := \avg{\phi}_{\mathrm{ref}}^{\comp} - \avg{\phi}^{\comp},
 \end{equation}
 where each level data $\avg{\phi}_{\mathrm{ref}}^{\ell}\in
 \avg{\phi}_{\mathrm{ref}}^{\comp}$
 is obtained by coarsening $\avg{\phi}_{\mathrm{ref}}$
 to $\Omega^{\ell}$. 
Then the convergence rates are estimated
 from the $L_p$ norms of $\avg{e_{\phi}}^{\comp}$
 by a modified Richardson extrapolation
 \cite[Section 5.4]{zhang2012fourthAMR-ADE}
 to counteract the effect of
 $\avg{\phi}_{\mathrm{ref}}$ not being the exact solution. 
Since two adjacent hierarchies may have different subdomains
 for each refinement level, 
 the convergence rates
 obtained for dynamic subdomains
 are not intended to verify the order of accuracy
 of an AMR method.
Instead, they measure how well
 the leading solution errors have been reduced
 by dynamic AMR.
 
In all tests,  
 the penalty parameter in (\ref{eq:GePUP-E}f)
 is set to  $\lambda=1$
 and the pressure is extracted from the Eulerian accelerations
 $\mathbf{a}:= \frac{\partial \mathbf{u}}{\partial t}$ and
 $\mathbf{a}^*:= \mathbf{g} + \nu\Delta \ubold -
 \ubold\cdot\nabla\ubold$
 \mbox{\cite[Section 4.3]{zhang2016gepup}}, 
 i.e., we rewrite (\ref{eq:INS}a)
 as $\mathbf{a}^* = \mathbf{a} + \nabla p$
 and solve for $p$ from the PPE
 $\Delta p = \nabla \cdot \mathbf{a}^*$
 with the Neumann boundary condition
 that results from (\ref{eq:INS}c)
 and the normal component of (\ref{eq:INS}a).

\subsection{The Taylor--Green vortex test}

Our first test is the Taylor--Green vortex
 \cite{taylor37:_mechan}
 on the domain $\mathcal{D} = [0,2]^2$
 with analytic solutions
 \begin{subequations}
  \label{eq:decayingSin-exactSol}
   \begin{align}
     \ubold(x,y,t) &= \exp(-2\pi^2\nu t)
     \left(
     \begin{matrix}
       -\cos(\pi x) \sin(\pi y) \\
       \sin(\pi x) \cos(\pi y)
     \end{matrix}
     \right),\\
     p(x, y, t) &= -\frac{1}{4} \exp(-6\pi^2\nu t)
     \bigg(
     \cos(2\pi x) + \cos(2\pi y)
     \bigg)
   \end{align}
 \end{subequations}
 where $\nu = 0.01$. 
The time derivative of the velocity cancels the diffusion term and
 the pressure gradient cancels the convection term,
 resulting in a zero external force.
We set $\ubold^b=\left.\ubold\right|_{\partial\dom}$
 as the Dirichlet boundary condition of the velocity
 and use cell averges of $\ubold(x,y,t_0)$
 in (\ref{eq:decayingSin-exactSol})
 as the initial condition.

We advance $\avg{\mathbf{w}}$ from $t_0=0$ to $t_e=1$
 with Cr $=0.1$
 on a static AMR hierarchy $\Omega_{\Upsilon}$
 with $\mathcal{D}^{1}=[0.5,1.5]^2$
 and $r=2$.
The composite errors at $t_e$ are calculated
 by subtracting the computed solutions from 
 cell averages of $\avg{\mathbf{u}(x,y,t_e)}$
 and $\avg{p(x,y,t_e)}$. 

Error norms and convergence rates of the proposed method
 are listed in Table \ref{tab:velIConverRate_suitableRefined}, 
 where fourth-order convergence rates in all norms 
 are clearly observed,
 verifying the correctness and the high accuracy
 of our subcycled AMR method in \Cref{alg:time-stepping_procedure}. 
In particular,
 neither the subcycling in time
 nor the spatial interpolation at the coarse-fine interface
 degrades the fourth-order accuracy.
 
\begin{table}
  \label{tab:velIConverRate_suitableRefined}
  \caption{Errors and convergence rates
      of the subcycled AMR method
      in \Cref{alg:time-stepping_procedure}
      for solving the Taylor--Green test
      (\ref{eq:decayingSin-exactSol})
      with $\mathrm{Re}=100$, $t_e=1$ and  $\mathrm{Cr}=0.1$.}
    \begin{tabular}{p{2.8cm}|c|c|c|c|c|c}
  \hline \hline
  \multicolumn{2}{c|}{static AMR} 
  & $h^{0}=\frac{1}{64}$ & rate
  & $h^{0}=\frac{1}{128}$& rate
  & $h^{0}=\frac{1}{256}$ 
  \\ \hline
  \multirow{6}{75pt}{$\ell_{\max}=1$; $r=2$; $\dom^1=[0.5,1.5]^2$}
  & $\ubold$ \ $L_{\infty}$ & 2.39e-06 & 3.95 & 1.55e-07 & 3.97 & 9.90e-09
  \\ 
  & $\ubold$ \ $L_1$ & 6.83e-07 & 3.98 & 4.31e-08 & 4.00 & 2.69e-09
  \\ 
  & $\ubold$ \ $L_2$  & 9.71-07 & 3.99 & 6.11e-08 & 4.01 & 3.78e-09
  \\ \cline{2-7}
  & $p$ \ $L_{\infty}$ & 1.47e-05 & 4.02 & 9.06e-07 & 4.00 & 5.60e-08
  \\ 
  & $p$ \ $L_1$   &  4.52e-06 & 4.04 & 2.75e-07 & 4.01 & 1.71e-08
  \\ 
  & $p$ \ $L_2$ & 6.01e-06 & 4.06 & 3.60e-07 & 3.99 & 2.26e-08
  \\ \hline \hline
\end{tabular}

\end{table}
 
\subsection{The viscous box test with Re=100}
 \label{subsec:test-viscous-box}

As in \cite{zhang2016gepup},
 the domain $\dom=[0,1]^2$ of this test has
 the no-slip boundary condition $\mathbf{u}^b=\mathbf{0}$
 and the initial velocity is 
\begin{equation}
  \label{eq:viscousBox}
  \mathbf{u}_0(x,y) 
  = 
  \left(
  \begin{array}{c}
	\, \sin^2(\pi x) \, \sin(2 \pi y) \,
    \\
  	\,  -\sin(2 \pi x) \, \sin^2(\pi y) \,
  \end{array}
  \right). 
\end{equation}

We advance $\avg{\mathbf{w}}$
 from $t_0=0$ to $t_e=0.5$ with Courant number $\mathrm{Cr} = 0.1$.
For AMR, static subdomains are used with $\ell^{\max}=1$.
The composite solution errors are calculated
 by Richardson extrapolation.

\begin{table}
  \label{tab:viscous-box}
  \centering
  \caption{Solution errors and convergence rates
    of the proposed method for the viscous box test
    with $\mathrm{Re}=100$, $t_0=0.0$,
    $t_e=0.5$, and  $\mathrm{Cr}=0.1$.
    For (b) and (c), 
    the subdomain $\dom^1$ of the refined level $\Omega^1$
    is static with $r=2$
    and consists of four squares of size $\frac{1}{8}$
    that are adjacent to the domain corners; 
    see Figure \ref{fig:pseudoColor-viscousbox-errors} (b).
    The composite data of solution errors
    are calculated via Richardson extrapolation. 
  }
  \begin{tabular}{p{3.3cm}|c|c|c|c|c|c}
  \hline \hline
  \multicolumn{2}{c|}{Grid size $h^0$ of the coarsest level}
  & $\frac{1}{64}-\frac{1}{128}$ & rate & $\frac{1}{128}-\frac{1}{256}$ & rate & $\frac{1}{256}-\frac{1}{512}$
  \\ \hline
  \multirow{6}{90pt}{(a) $\ell_{\max}=0$, i.e., single-level grids}
  & \hspace{4.19pt}$\ubold$ \ $L_{\infty}$ & 7.86e-06 & 2.38 & 1.51e-06 & 2.76 & 2.23e-07
  \\ 
  & $\ubold$ \ $L_1$ & 2.03e-06 & 3.96 & 1.30e-07 & 4.00 & 8.12e-09
  \\ 
  & $\ubold$ \ $L_2$ & 2.62e-06 & 3.94 & 1.71e-07 & 3.96 & 1.10e-08
  \\ \cline{2-7}
  & \hspace{4.19pt}$p$ \ $L_{\infty}$ & 2.04e-05 & 2.16 & 4.54e-06 & 1.19 & 1.99e-06
  \\ 
  & $p$ \ $L_1$ & 1.89e-06 & 3.72 & 1.43e-07 & 3.19 & 1.56e-08
  \\ 
  & $p$ \ $L_2$ & 3.04e-06 & 3.59 & 2.53e-07 & 2.71 & 3.85e-08
  \\ \hline
  \multirow{6}{90pt}{(b) $\ell_{\max}=1$ with static $\dom^1$
  and no (COH-4) after $\mathbf{P}^{\comp}_0$
  in (\ref{eq:comp_ProjOp})}
  & \hspace{4.19pt}$\ubold$ \ $L_{\infty}$ & 7.26e-06 & 3.91 & 4.82e-07 & 3.77 & 3.53e-08
  \\ 
  & $\ubold$ \ $L_1$ & 1.84e-06 & 3.99 & 1.15e-07 & 4.03 & 7.03e-09 
  \\ 
  & $\ubold$ \ $L_2$ & 2.43e-06 & 3.99 & 1.53e-07 & 4.03 & 9.38e-09
  \\ \cline{2-7}
  & \hspace{4.19pt}$p$ \ $L_{\infty}$ & 2.06e-05 & 3.25 & 2.16e-06 & 1.80 & 6.18e-07
  \\ 
  & $p$ \ $L_1$ & 1.80e-06 & 3.74 & 1.35e-07 & 3.16 & 1.51e-08 
  \\ 
  & $p$ \ $L_2$ & 2.98e-06 & 3.78 & 2.17e-07 & 3.27 & 2.25e-08
  \\ \hline
  \multirow{6}{90pt}{(c) $\ell_{\max}=1$ with static $\dom^1$
  and $\mathbf{P}^{\comp}_0$ always followed by (COH-4)}
  & \hspace{4.19pt}$\ubold$ \ $L_{\infty}$  & 5.59e-05 & 2.98 & 7.11e-06 & 3.02 & 8.76e-07
  \\ 
  & $\ubold$ \ $L_1$ & 6.59e-06 & 3.01 & 8.19e-07 & 3.00 & 1.02e-07  
  \\ 
  & $\ubold$ \ $L_2$ & 1.11e-05 & 2.98 & 1.41e-06 & 3.01 & 1.74e-07
  \\ \cline{2-7}
  & \hspace{4.19pt}$p$ \ $L_{\infty}$ & 2.15e-05 & 2.56 & 3.65e-06 & 2.16 & 8.17e-07
  \\ 
  & $p$ \ $L_1$ & 4.39e-06 & 3.16 & 4.93e-07 & 3.07 & 5.84e-08
  \\ 
  & $p$ \ $L_2$ & 4.93e-05 & 3.07 & 5.86e-06 & 3.04 & 7.13e-07
  \\ \hline  \hline
\end{tabular}


\end{table}

\begin{figure}
  \centering
  \subfigure[$\ell_{\max}=0$]
  {
    \includegraphics[scale=0.112]{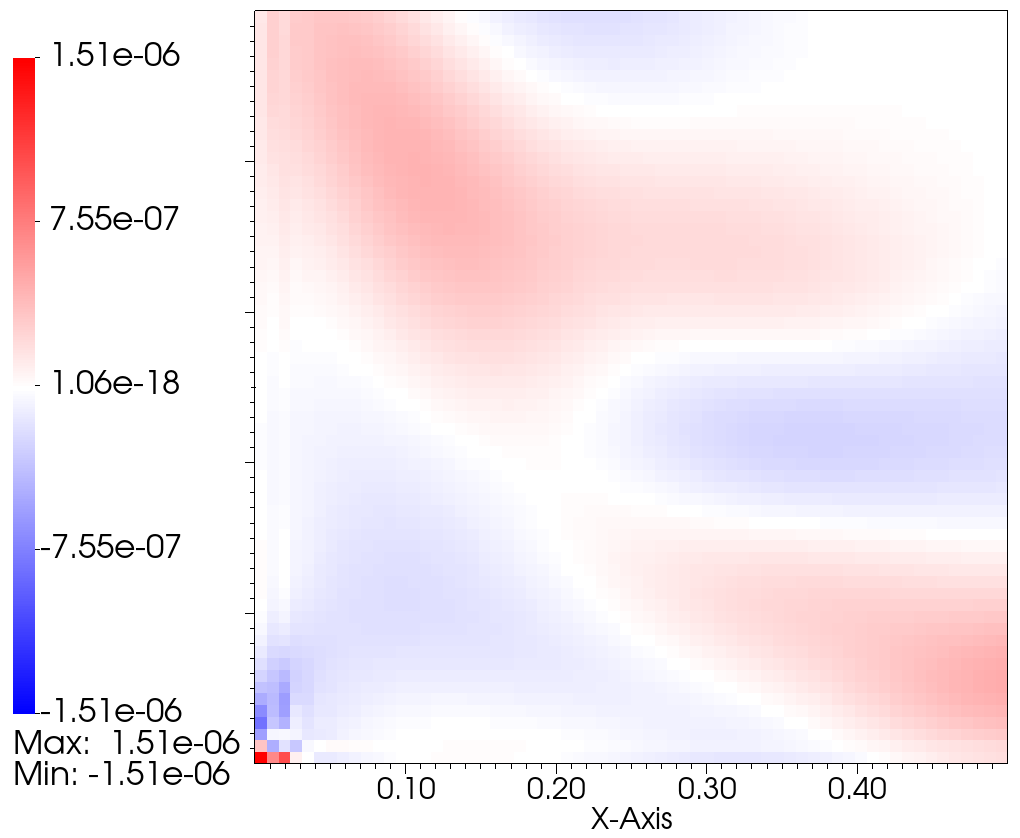}
  }
  \hfill
  \subfigure[$\ell_{\max}=1$ and no \mbox{(COH-4)}]
  {
    \includegraphics[scale=0.112]{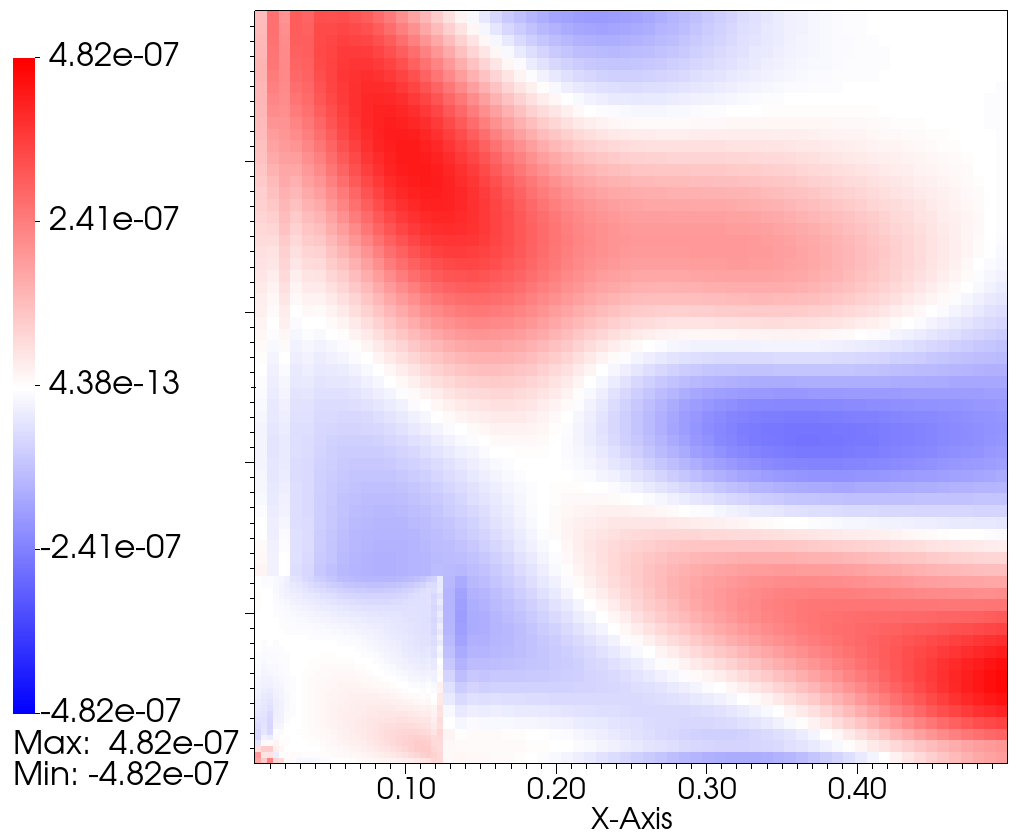}
  }
  \hfill
  \subfigure[$\ell_{\max}=1$ and \mbox{(COH-4)}]
  {
    \includegraphics[scale=0.112]{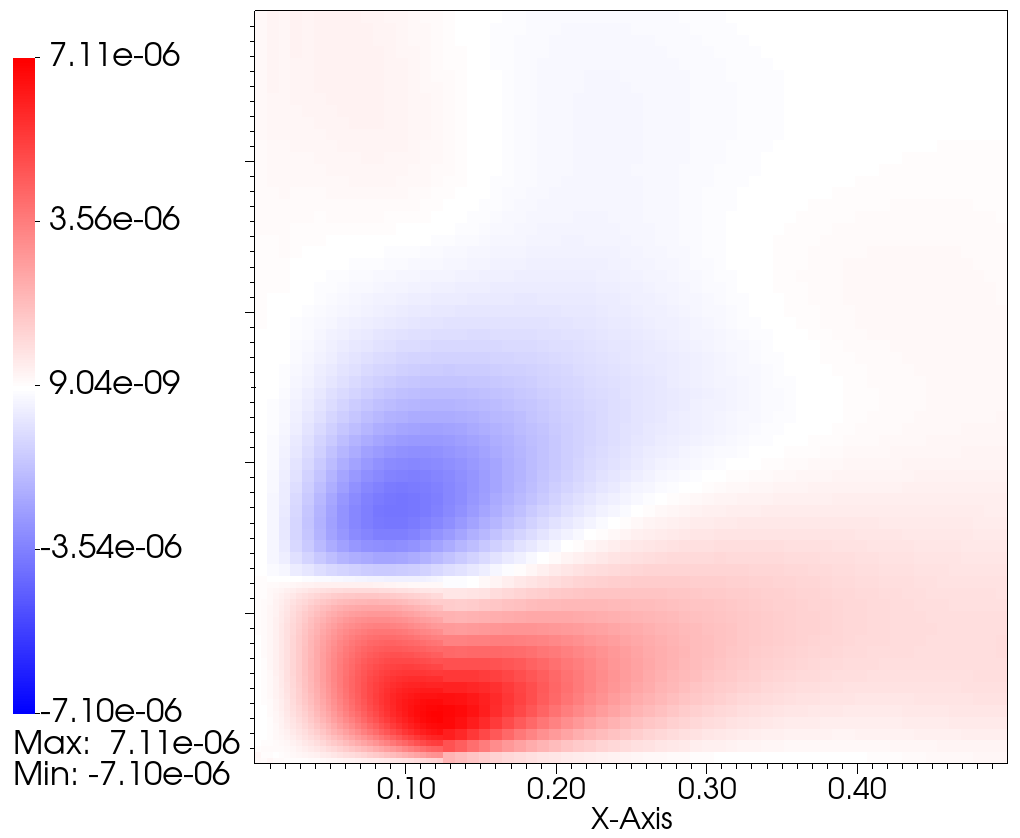}
  }
  
  \subfigure[$\ell_{\max}=0$]
  {
    \includegraphics[scale=0.112]{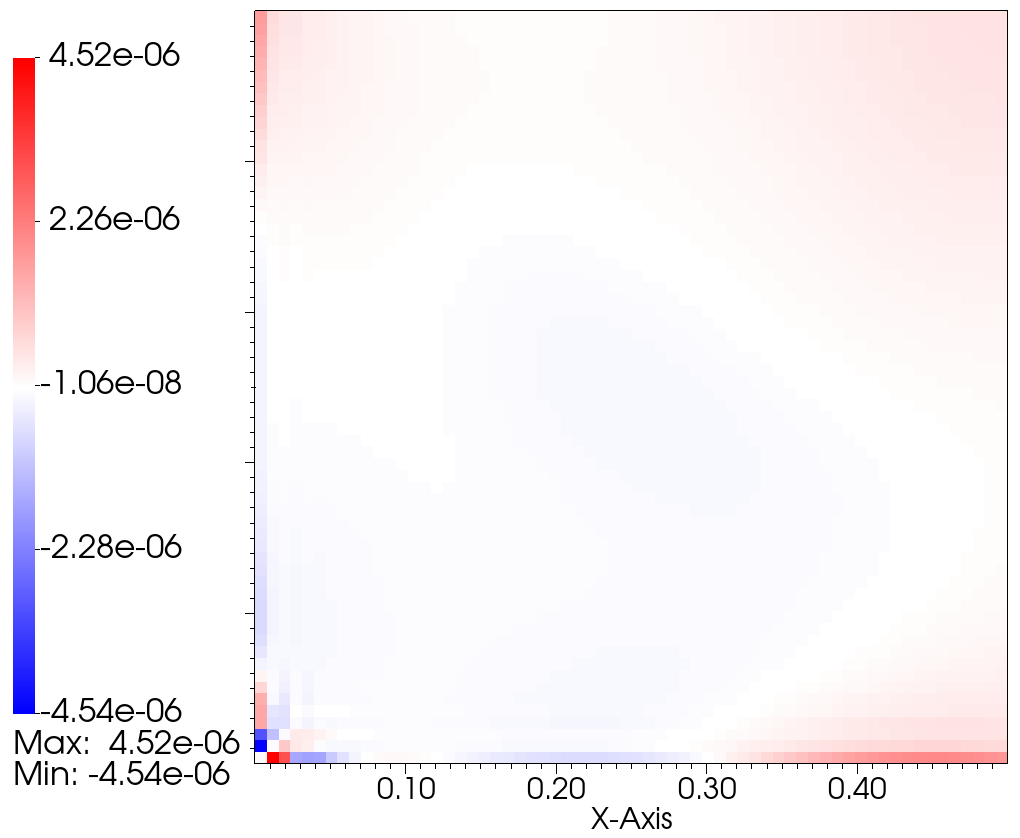}
  }
  \hfill
  \subfigure[$\ell_{\max}=1$ and no \mbox{(COH-4)}]
  {
    \includegraphics[scale=0.112]{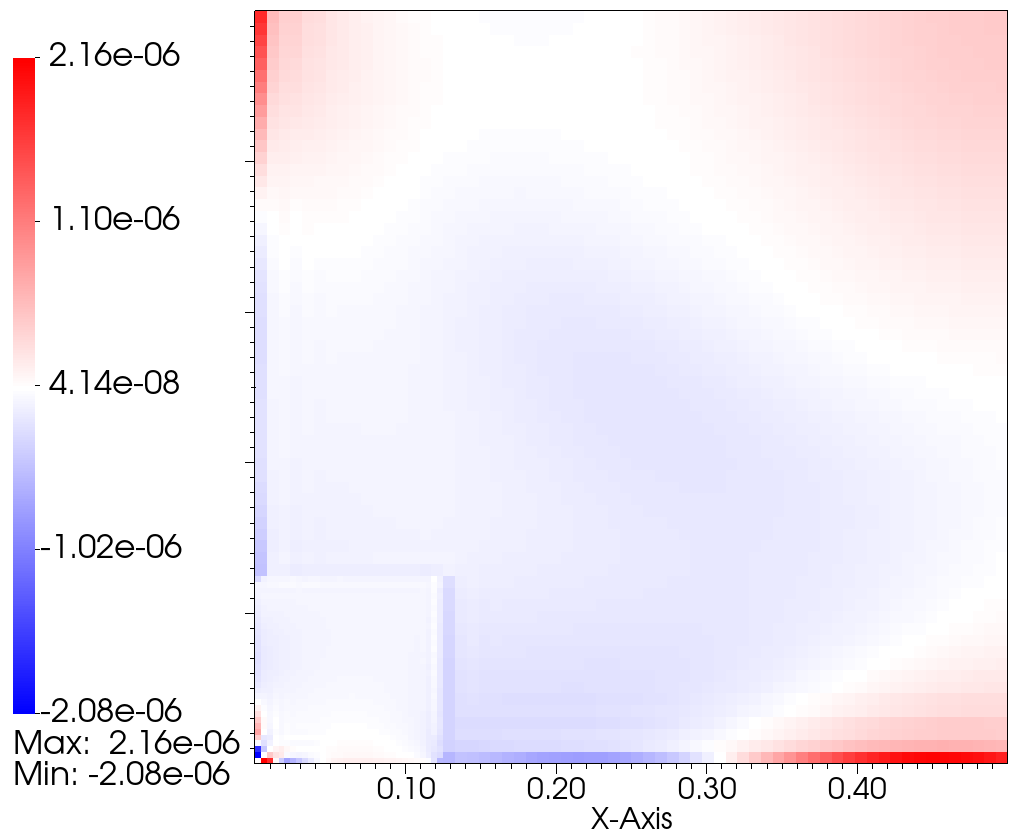}
  }
  \hfill
  \subfigure[$\ell_{\max}=1$ and \mbox{(COH-4)}]
  {
    \includegraphics[scale=0.112]{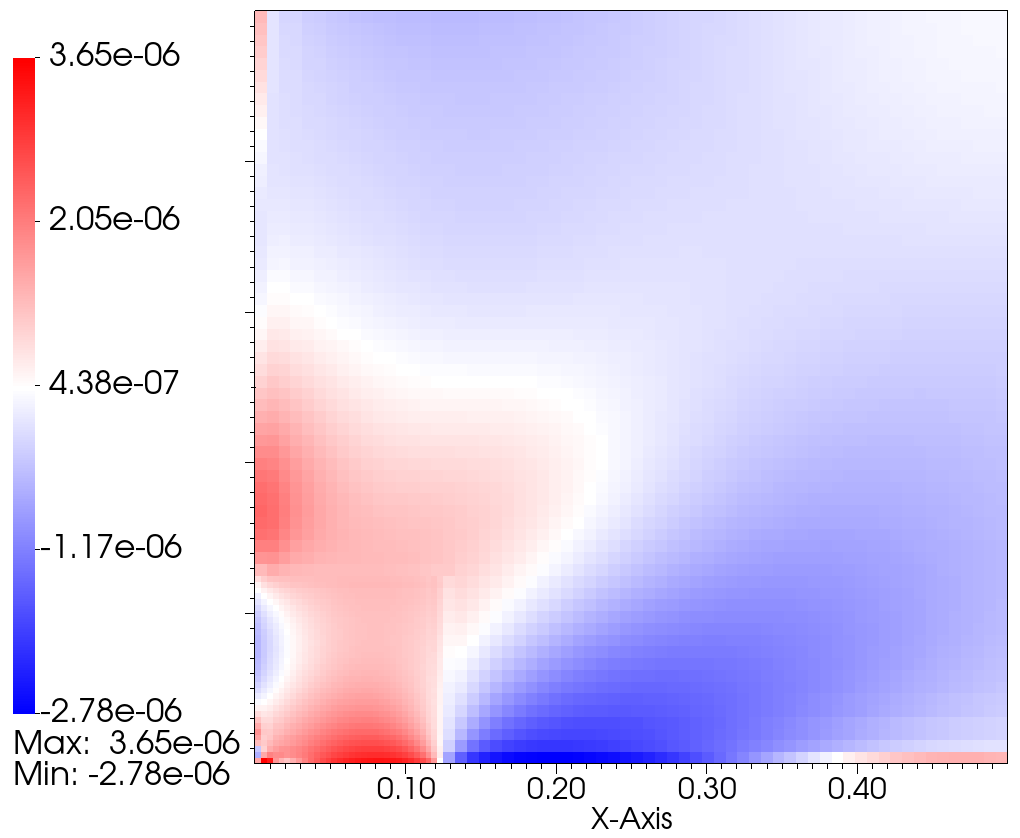}
  }
  \caption{Error snapshots of the horizontal velocity (the first row)
    and the pressure (the second row) for
    the viscous box test in \Cref{tab:viscous-box}
    with $h^0=\frac{1}{128}$. 
    A small box represents the \emph{static} subdomain. 
    ``$\ell_{\max}=0$'', ``no (COH-4)'',  and ``(COH-4)'' 
    correspond to (a), (b), and (c) of \Cref{tab:viscous-box}, 
    respectively. 
  }
  \label{fig:pseudoColor-viscousbox-errors}
\end{figure}

We list errors and convergence rates of the proposed method
 in \Cref{tab:viscous-box}.
Similar to those in \cite[Table 4]{zhang2016gepup}
 and \cite[Section 6.2]{Li2025GePUP-E}, 
 the convergence rates of both velocity and pressure
 in \Cref{tab:viscous-box}(a) 
 based on the $L_{\infty}$ norm are less than 4
 for single-level grids without AMR. 
These order reductions are caused by ${\cal C}^1$ discontinuities
 at the domain corners, 
 where errors in the Stokes pressure, 
 which responds to the commutator of
 the Laplacian and the Leray-Helmholtz projection, 
 may be very large for low Reynolds numbers 
 \cite{cozzi11:_laplac_leray}; 
 see \cite[Section 2.1]{liu2007stability}
 and \cite{zhang2016gepup} for more details.
Figure \ref{fig:pseudoColor-viscousbox-errors}(a,d)
 also show dominant solution errors near the sharp corners.
 
Fortunately,
 these order reductions are substantially alleviated
 by static local refinement.
When the domain corners are covered by the fine level, 
 as shown in Figure \ref{fig:pseudoColor-viscousbox-errors}(b,e),
 dominant errors of both pressure and velocity shift from
 the corners into the interior of the domain.
Consequently, the fourth-order convergence rates of velocity
 in all norms are recovered in Table \ref{tab:viscous-box}(b),
 where we also observe third-order convergence
 for the pressure in terms of the $L_1$ and $L_2$ norms.

The test case of \Cref{tab:viscous-box}(c)
 is the same as that of \Cref{tab:viscous-box}(b)
 except that \mbox{(COH-4)} in \Cref{sec:AMR_composite-operators}
 is always applied to cell-averaged velocities
 after the composite projection. 
It is clear that convergence rates of $\mathbf{u}$
 for all norms are reduced by one. 
Also, error norms of $\mathbf{u}$ on the finest hierarchy
 increase by a factor of at least 15.
In Figure \ref{fig:pseudoColor-viscousbox-errors}(c,f),
 all dominant errors of $\mathbf{u}$ and $p$
 are at the coarse-fine interface,
 with a noticeable jump for the velocity.
These evidences suggest that,
 in synchronizing velocities across multiple levels,
 it is inappropriate
 to replace the projected velocity on invalid regions of a coarse level
 by averages of its counterparts on the finer level.
We speculate that doing so 
 disrupts the smoothness of the velocity on the coarse level, 
 incurring a negative impact upon the accuracy. 
Hereafter, all tests are performed
 without appending \mbox{(COH-4)} 
 to the composite projection.

\subsection{Four-way vortex merging with Re=1000}
\label{sec:four-way-vortex}
Following \cite{Almgren1998ConservativeAdaptiveProjection},
 we use periodic boundary conditions for the domain $\Omega=[0,1]^2$
 and set the initial velocity as 
 $(u_0,v_0) = \left(\frac{\partial \psi}{\partial y},
   -\frac{\partial \psi}{\partial x}\right)$
 where the stream function $\psi$ is the periodic solution
 of Poisson's equation $\Delta \psi = -\omega$
 and the vorticity $\omega$ is the superposition
 of four vortex functions, 
 \begin{displaymath}
   \omega(x,y) = \sum\nolimits_{i=1}^4\frac{1}{2}\Gamma_i
  \left(
     1+\tanh\left[3-100\sqrt{(x-x^o_i)^2+(y-y^o_i)^2}\right]
   \right). 
 \end{displaymath}
For $i=1,2,3,4$,
 the $i$th vortex strength $\Gamma_i$
 is $-150$, $50$, $50$, $50$, 
 and the $i$th vortex center $(x^o_i,y^o_i)$ is
 at $(0.5,0.5)$, $(0.59,0.5)$, $(0.455,0.5+0.045\sqrt{3})$,
 $(0.455,0.5-0.045\sqrt{3})$, respectively.

\begin{table}
  \label{tab:vortexMerging-convergence}
  \centering
  \caption{Error norms and convergence rates
    of the proposed method
    for simulating the four-way vertex merging with Re=1000,
    $t_0=0$, $t_e=0.25$, and $\mathrm{Cr}=0.5$.
    For (c), the convergence rates are estimated with
    a modified Richardson extrapolation \cite[Section 5.4]{zhang2012fourthAMR-ADE}.
  }
   \begin{tabular}{p{2.8cm}|c|c|c|c|c|c}
  \hline \hline
  \multicolumn{2}{c|}{\footnotesize{Grid size $h^{\ell_{\max}}$ of the finest level}}
  & $\frac{1}{256}-\frac{1}{512}$ & rate & $\frac{1}{512}-\frac{1}{1024}$& rate & $\frac{1}{1024}-\frac{1}{2048}$ 
  \\ \hline
  \multirow{6}{75pt}{(a) $\ell_{\max}=0$, i.e., single-level grids}
  & \hspace{4.19pt}$\ubold$ \ $L_{\infty}$ & 5.33e-03 & 3.61 & 4.36e-04 & 3.90 & 2.92e-05
  \\ 
  & $\ubold$ \ $L_1$ & 5.84e-05 & 3.80 & 4.20e-06 & 3.96 & 2.71e-07
  \\ 
  & $\ubold$ \ $L_2$  & 2.56e-04 & 3.72 & 1.94e-05 & 3.93 & 1.27e-06
  \\ \cline{2-7}
  & \hspace{4.19pt}$p$ \ $L_{\infty}$ & 2.12e-03 & 3.68 & 1.65e-04 & 3.96 & 1.06e-05
  \\ 
  & $p$ \ $L_1$ & 4.65e-05 & 3.91 & 3.09e-06 & 3.99 & 1.94e-07
  \\ 
  & $p$ \ $L_2$ & 1.40e-04 & 3.96 & 9.05e-06 & 4.00 & 5.66e-07
  \\ \hline
  \multirow{6}{75pt}{(b) static $\dom^1$ with $\ell_{\max}=1$ and $r=2$}
  &\hspace{4.19pt}$\ubold$ \ $L_{\infty}$  & 5.43e-03 & 3.63 & 4.39e-04 & 3.91 & 2.92e-05
  \\ 
  & $\ubold$ \ $L_1$  & 5.79e-05 & 3.76 & 4.26e-06 & 3.94 & 2.78e-07
  \\ 
  &$\ubold$ \ $L_2$  & 2.50e-04 & 3.69 & 1.94e-05 & 3.93 & 1.27e-06
  \\ \cline{2-7}
  &\hspace{4.19pt}$p$ \ $L_{\infty}$  & 2.10e-03 & 3.69 & 1.63e-04 & 3.94 & 1.06e-05
  \\ 
  &$p$ \ $L_1$  & 4.45e-05 & 3.86 & 3.06e-06 & 3.98 & 1.94e-07
  \\ 
  &$p$ \ $L_2$  & 1.28e-04 & 3.86 & 8.81e-06 & 3.97 & 5.62e-07
  \\ \hline
  \multicolumn{2}{c|}{\footnotesize{Grid size $h^{\ell_{\max}}$ of the finest level}}
  & $\frac{1}{256}-\frac{1}{2048}$ & rate & $\frac{1}{512}-\frac{1}{2048}$& rate & $\frac{1}{1024}-\frac{1}{2048}$
  \\ \hline
  \multirow{6}{75pt}{(c) dynamic $\dom^1$ with $\ell_{\max}=1$, $r=2$, 
  and $|\nabla \times \mathbf{u}| \geq 1$ as the refinement criterion.
  }
  &\hspace{4.19pt}$\ubold$ \ $L_{\infty}$  & 1.54e-02 & 2.95 & 2.37e-03 & 3.31 & 2.17e-04
  \\ 
  & $\ubold$ \ $L_1$ & 3.74e-04 & 3.20 & 4.04e-05 & 3.70 & 2.89e-06
  \\ 
  &$\ubold$ \ $L_2$  & 1.47e-03 & 3.24 & 1.53e-04 & 3.82 & 1.01e-05
  \\ \cline{2-7}
  &\hspace{4.19pt}$p$ \ $L_{\infty}$  & 1.34e-02 & 3.52 & 1.16e-03 & 3.81 & 7.72e-05
  \\ 
  &$p$ \ $L_1$  & 1.89e-04 & 3.40 & 1.78e-05 & 3.68 & 1.29e-06
  \\ 
  &$p$ \ $L_2$  & 8.17e-04 & 3.41 & 7.60e-05 & 3.78 & 5.16e-06
  \\ \hline \hline
\end{tabular}

\end{table}

\begin{figure}
  \centering
    \subfigure[$t=0.05$]
  {
    \includegraphics[scale=0.165]{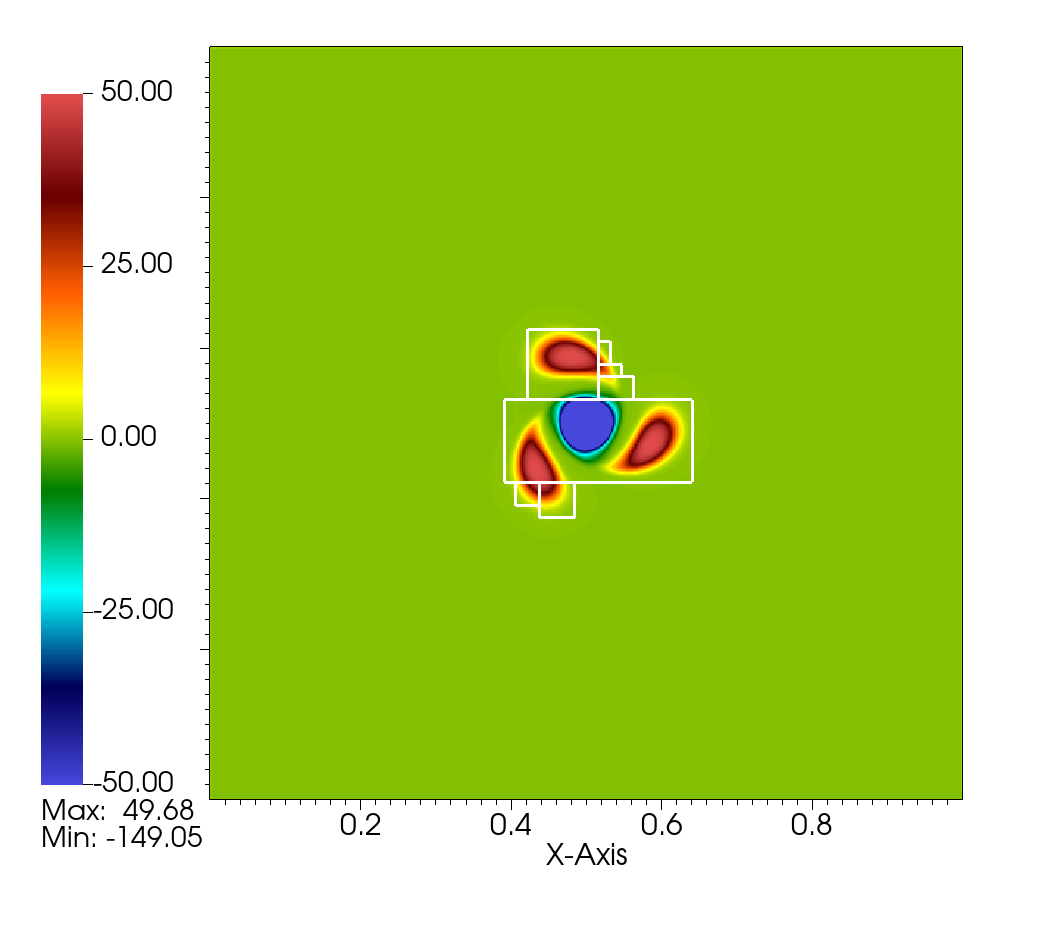}
  }
  \hfill
  \subfigure[$t=0.25$]
  {
    \includegraphics[scale=0.165]{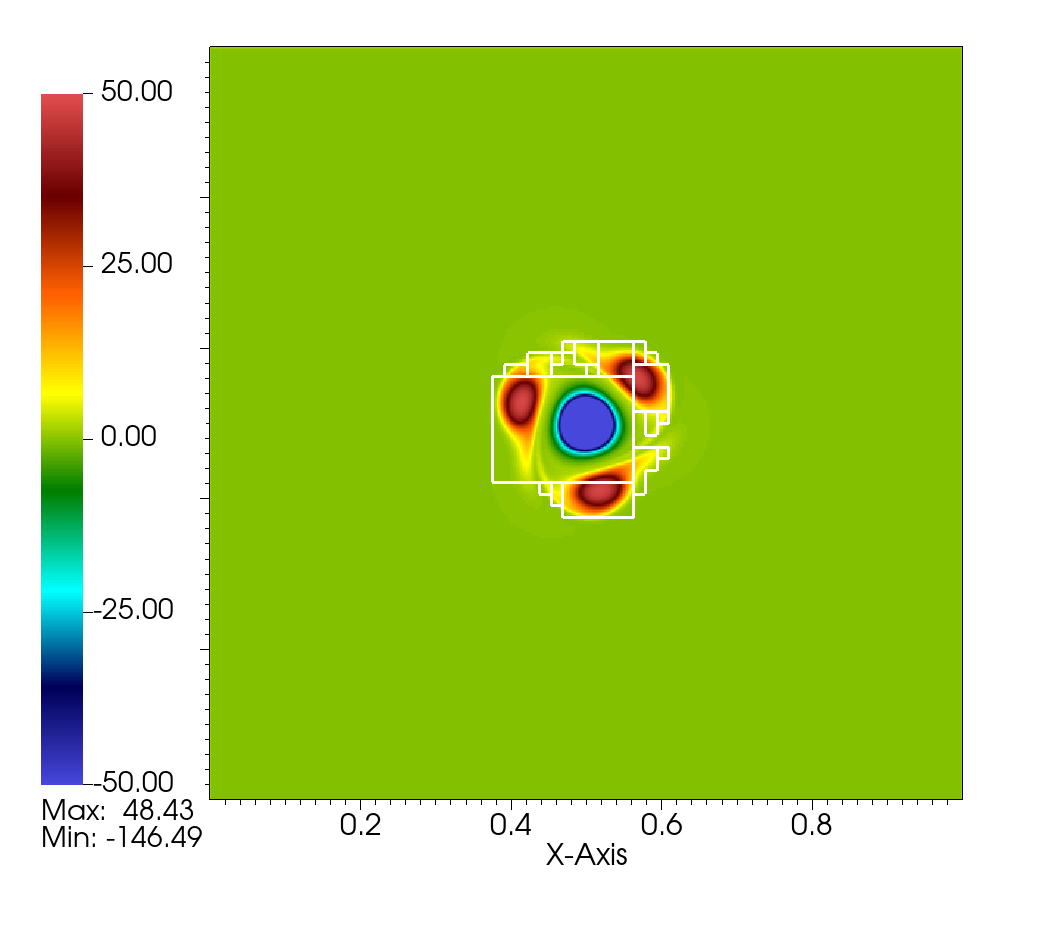}
  }
  \caption{Vorticity snapshots of the four-way vortex merging test
    for case (c) of \Cref{tab:vortexMerging-convergence}
    with $h^0 = \frac{1}{512}$, $r=2$, and $\mathrm{Cr}=0.5$.
    The refinement level is represented by white boxes.
  }
  \label{fig:vortexMerging-AMR-vorticitySnapshots}
\end{figure}

For Re=1000, 
 we advance the cell-averaged initial velocity from $t_0=0$ to $t_e=0.25$
 on single-level grids,
 statically refined grids with $\dom^1=[0.25,0.75]^2$, 
 and dynamically refined grids
 with $|\nabla \times \mathbf{u}| \geq 1$
 as the refinement criterion. 
As shown in Table \ref{tab:vortexMerging-convergence}(a,b),
 the proposed method achieves fourth-order convergence rates
 for $\mathbf{u}$ and $p$ in all norms
 on single-level grids and statically refined grids.
The error norms on statically refined grids
 are very close to those on single-level grids,
 indicating that the solution errors on $\Omega\setminus\dom^1$
 have already been reduced to a negligible level by the coarsest grid.

In Table \ref{tab:vortexMerging-convergence}(c), 
 the convergence rates of $\mathbf{u}$ and $p$ in all norms
 (except the $L_{\infty}$ norm for velocity) 
 on dynamically refined grids
 are close to 4, 
 which implies that the order reduction in the $L_{\infty}$-norm of velocity
 only happens at an $O(1)$ number of locations.
By the paragraph below (\ref{eq:compositeErrorDynamic}),
 the implication of this order reduction is not
 that the proposed method fails to be fourth-order accurate 
 but that the criterion of dynamic refinement
 has missed dominating errors at an $O(1)$ number of locations. 
Consequently, as shown in the last column of \Cref{tab:vortexMerging-convergence}, 
 the solution errors on dynamically refined grids
 are roughly ten times larger than those on statically refined grids.
On the other hand,
 the area of dynamically refined regions
 shown in Figure \ref{fig:vortexMerging-AMR-vorticitySnapshots}
 is much smaller than that of the statically refined region $\dom^1=[0.25,0.75]^2$.
This discussion illustrates the flexibility of dynamic grid refinement
 in balancing accuracy and efficiency.

\subsection{Single-vortex test with Re=20,000}
\label{sec:single-vortex-test}
On the unit box $\Omega=[0,1]^2$, %
 we first define an axisymmetric velocity field
 \begin{equation*}
   \label{eq:singleVortexBox}
   \renewcommand{\arraystretch}{1.2}
   u_{\theta}(r_v) =
   \left\{
     \begin{array}{cc}
       \Gamma(\frac{1}{2}r_v - 4r_v^3) & \text{ if } r_v<R ;
       \\
       \Gamma\frac{R}{r_v}(\frac{1}{2}R - 4R^3) & \text{ otherwise}, 
     \end{array}
   \right.
 \end{equation*}
 where $r_v$ is the distance from the domain center $(\frac{1}{2}, \frac{1}{2})^{\top}$
 and $R=0.2$ and $\Gamma=1$
 give $U^*:=\max(u_{\theta}) = 0.068$.
Then we project ten times cell averages of ${u}_{\theta}$
 onto the divergence-free space with no-slip boundary conditions 
 to obtain the cell-averaged initial velocity.
A small kinematic viscosity $\nu^* = 3.4\times 10^{-6}$
 gives $\mathrm{Re}=20,000$.

 \begin{figure}
  \centering
  \subfigure[$t=40$]
  {
    \includegraphics[scale=0.18]{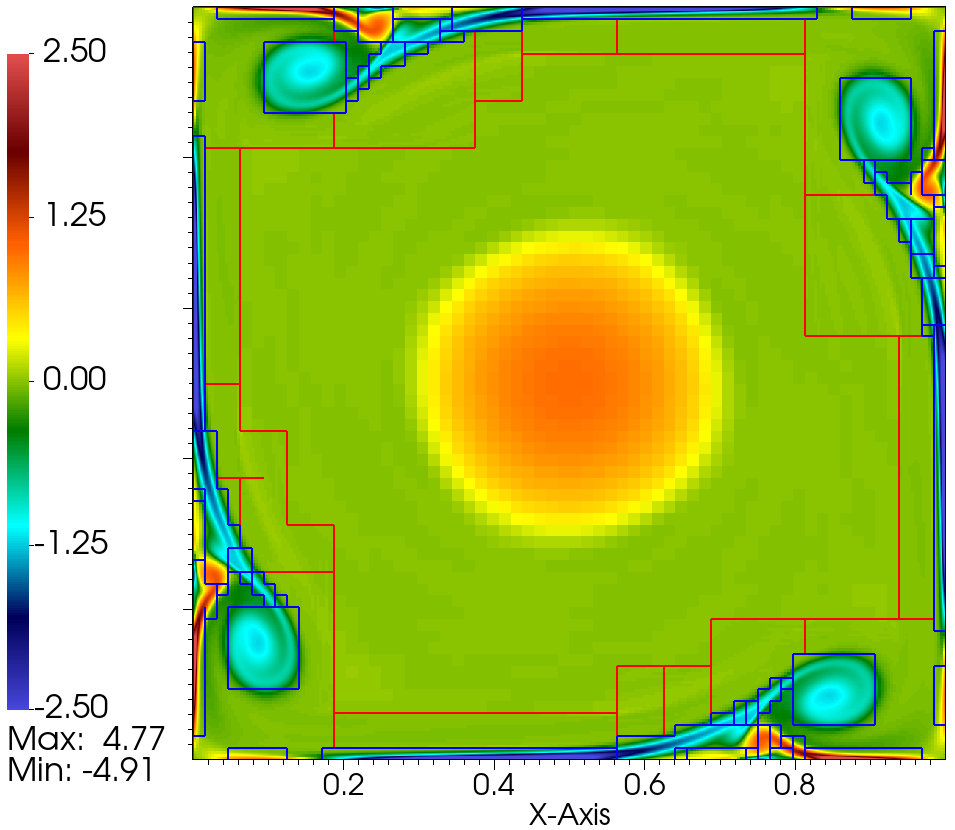}
  }
  \subfigure[$t=60$]
  {
    \includegraphics[scale=0.18]{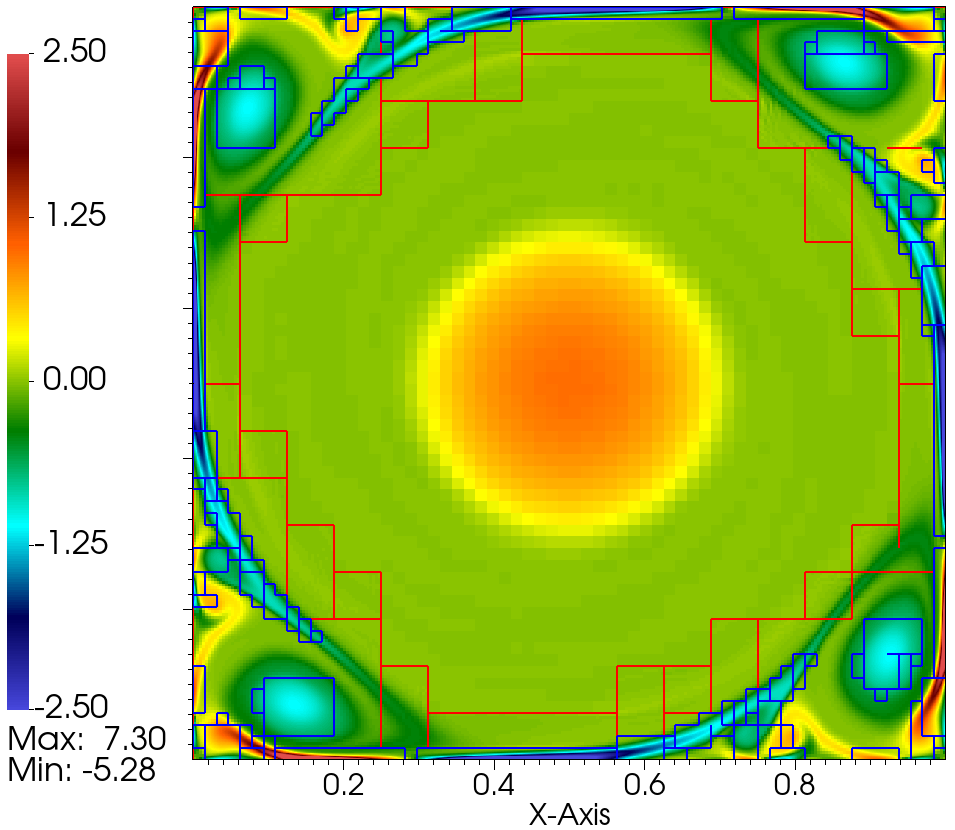}
  }
  \caption{Vorticity snapshots of the single-vortex test
    on a three-level AMR hierarchy with
    \mbox{$h^0 = \frac{1}{64}$}, $r=4$, and $\mathrm{Cr}=0.5$.
    Red boxes and blue boxes respectively represent $\Omega^1$ and
    $\Omega^2$, which are generated by the refinement criteria
    $|\nabla \times \mathbf{u} |\geq 0.15$
    and 
    $|\nabla \times \mathbf{u} |\geq 0.8$, respectively.
    }
  \label{fig:single-vortex-vorticitySnapshots}
\end{figure}

\begin{figure}
  \centering
  \subfigure[(TLS-1) for single vortex]
  {  
    \includegraphics[scale=0.273]{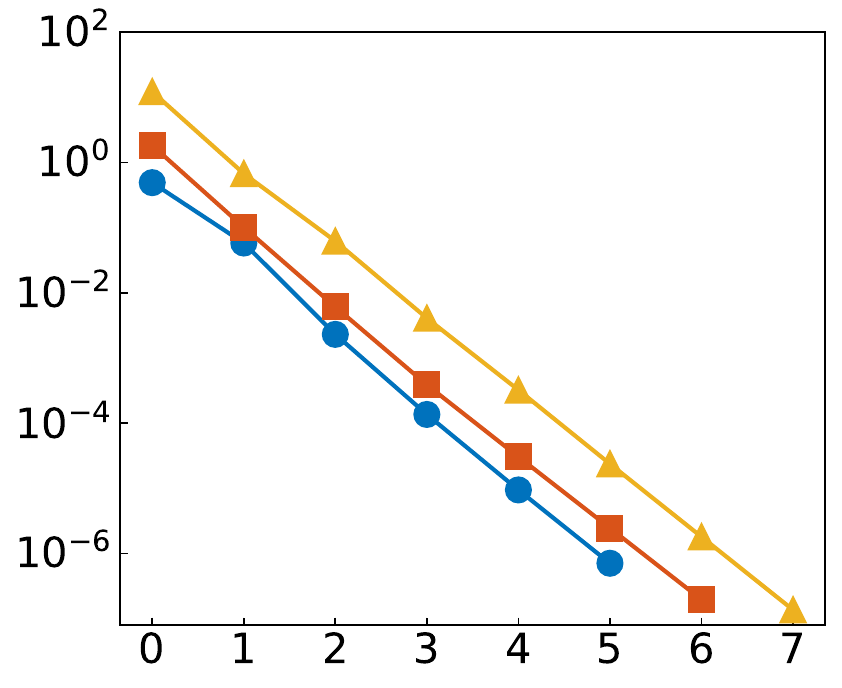}
  }
  \hfill
  \subfigure[(TLS-2) for single vortex]
  {  
    \includegraphics[scale=0.273]{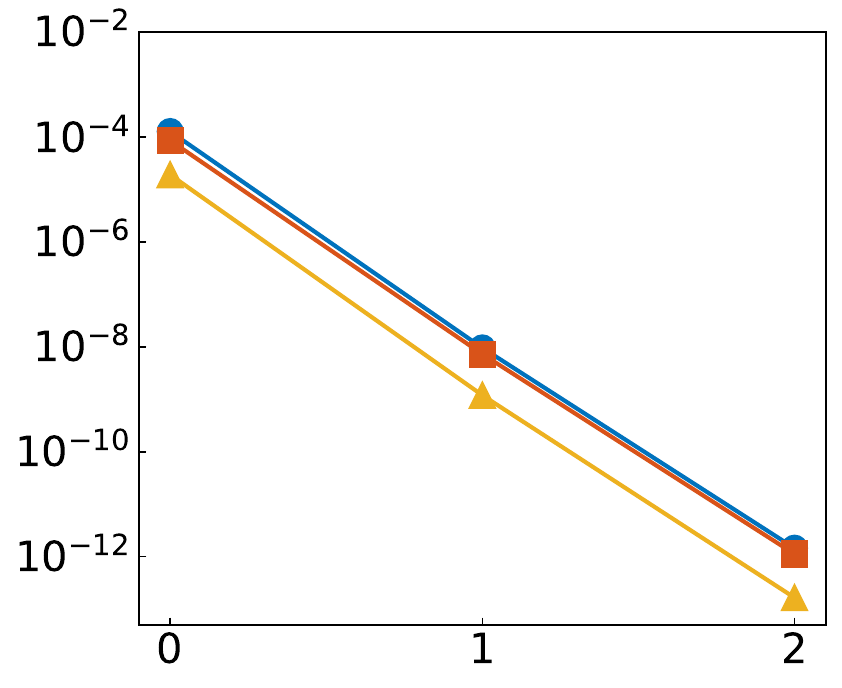}
  }
  \hfill
  \subfigure[(TLS-3) for single vortex]
  {  
    \includegraphics[scale=0.273]{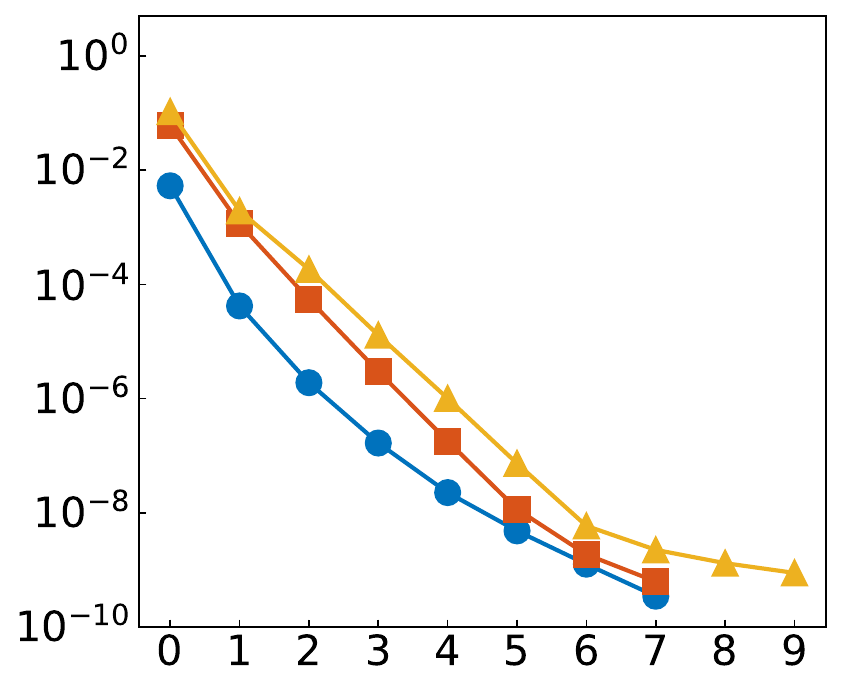}
  }
  \\
  \subfigure[(TLS-1) for dipole collison]
  { 
    \includegraphics[scale=0.273]{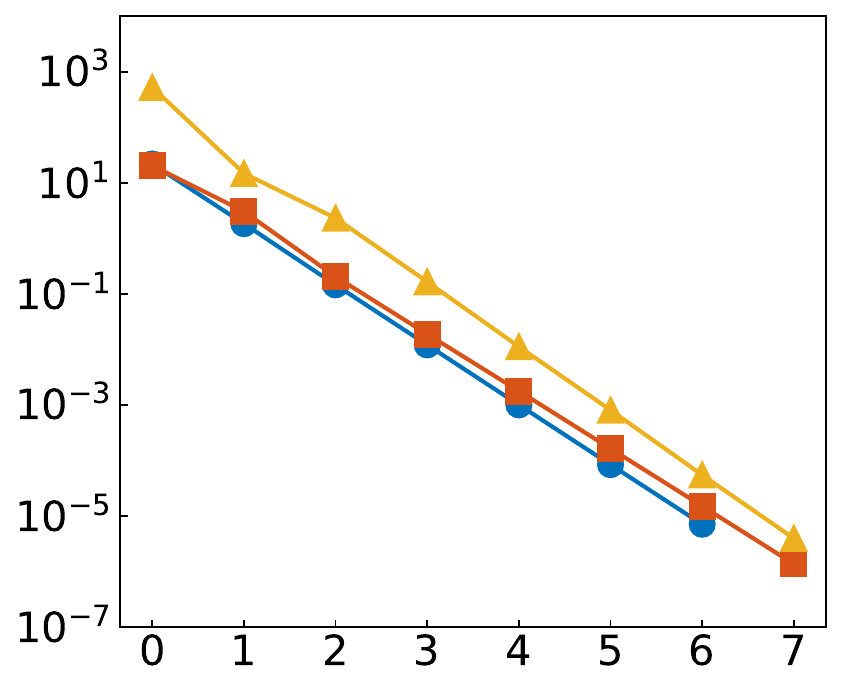}
  }
  \hfill
  \subfigure[(TLS-2) for dipole collison]
  {  
    \includegraphics[scale=0.273]{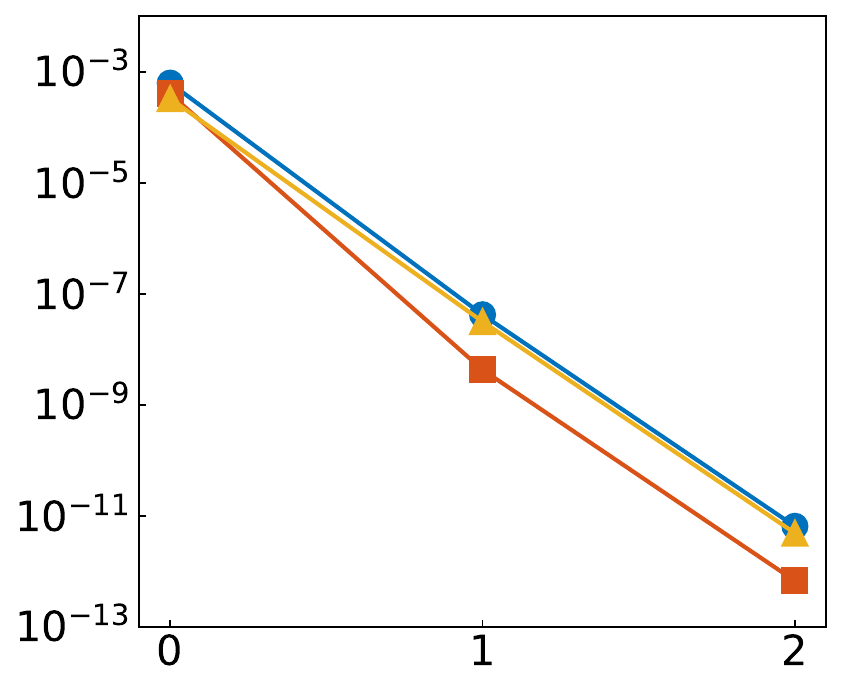}
  }
  \hfill
  \subfigure[(TLS-3) for dipole collison]
  {  
    \includegraphics[scale=0.273]{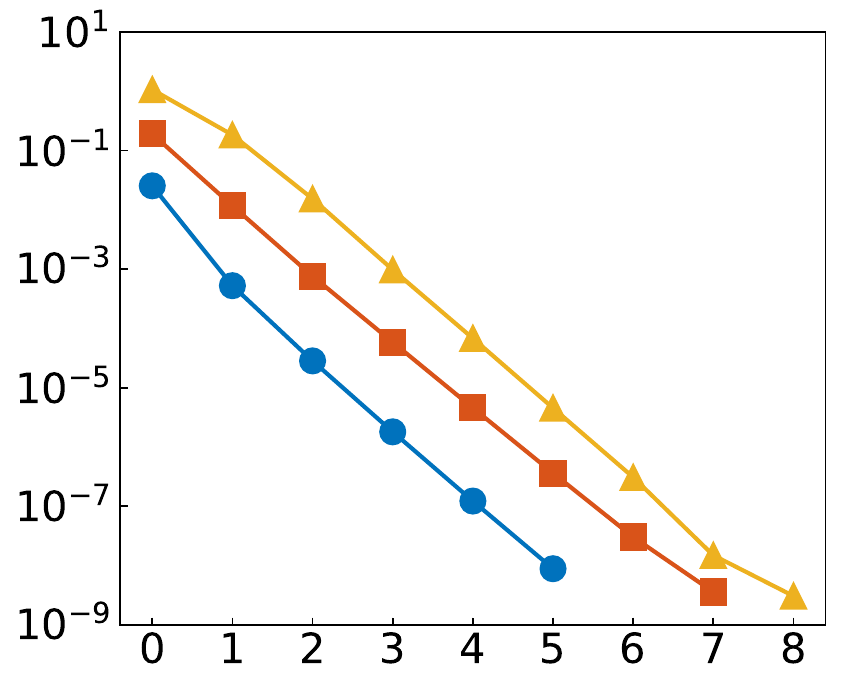}
  }
  \caption{Performance of multigrid V-cycles in
      Algorithm \ref{alg:time-stepping_procedure} for
      solving the single-vortex test 
      and the dipole vortex-wall collision test 
      at the final time step.
     The abscissa and the ordinate are
      the iteration number of multigrid V-cycles
      and the max-norm of the residual of the solution, respectively.
      The lines marked with  ``$\bullet$, ``{\tiny $\blacksquare$}'',
      and ``$\blacktriangle$''
      represent the residuals for level 0, 1, and 2, respectively.
      In all tests, 
      the numbers of pre-smoothing and post-smoothing are both set to $4$,
      and the relative and absolute residual convergence thresholds are set to
      $10^{-8}$ and  $10^{-12}$, respectively.
    }
  \label{fig:mg_convergenceRate}    
\end{figure}

The initial velocity is advanced from $t_0=0$ to $t_e=60$
 by the proposed method on a three-level AMR hierarchy with $r=4$
 and dynamic regridding,
 with the vorticity field plotted at two key time instances
 in \Cref{fig:single-vortex-vorticitySnapshots}. 
The mosaic pattern on the coarsest level is significantly reduced
 on the intermediate level 
 and becomes indiscernible on the finest level.
The prominent features of the vortex sheet roll-up
 and the formation of counter-vortices agree
 with those in \cite{bell1991efficient,zhang2016gepup}.
In particular,
 the coherent structures shown in \Cref{fig:single-vortex-vorticitySnapshots}
 are visually indistinguishable
 from those on single-level grids in \mbox{\cite[Fig. 4]{zhang2016gepup}}.
The performance of the geometric multigrid method
 in solving the three linear systems (TLS-1,2,3) 
 is shown in Figure \ref{fig:mg_convergenceRate}(a-c),
 where the residual reduction rate
 is around 10 for (TLS-1,3)
 and about $10^4$ for (TLS-2).
 
\begin{figure}
  \centering
  \subfigure[$t=5$]
  {
    \includegraphics[width=6cm, height=2.7cm, keepaspectratio=false]{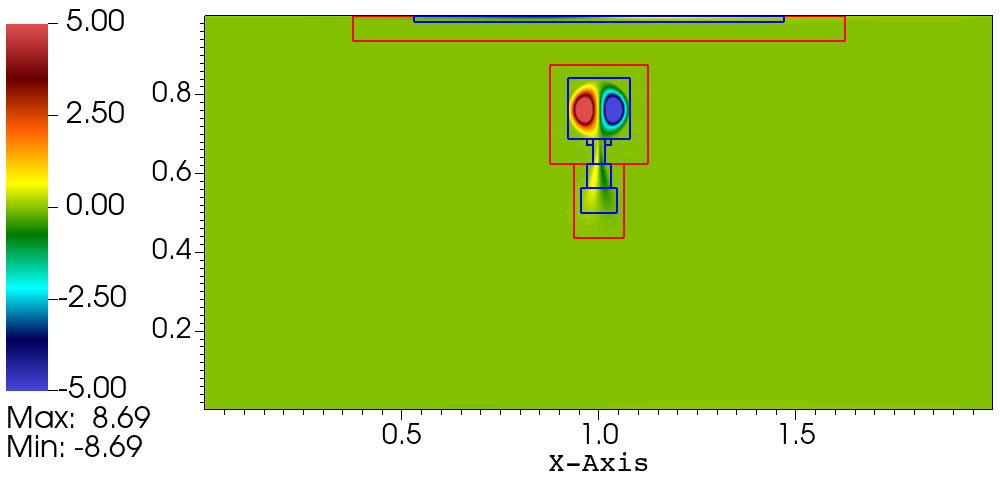}
  }
  \subfigure[$t=10$]
  {
    \includegraphics[width=6cm, height=2.7cm, keepaspectratio=false]{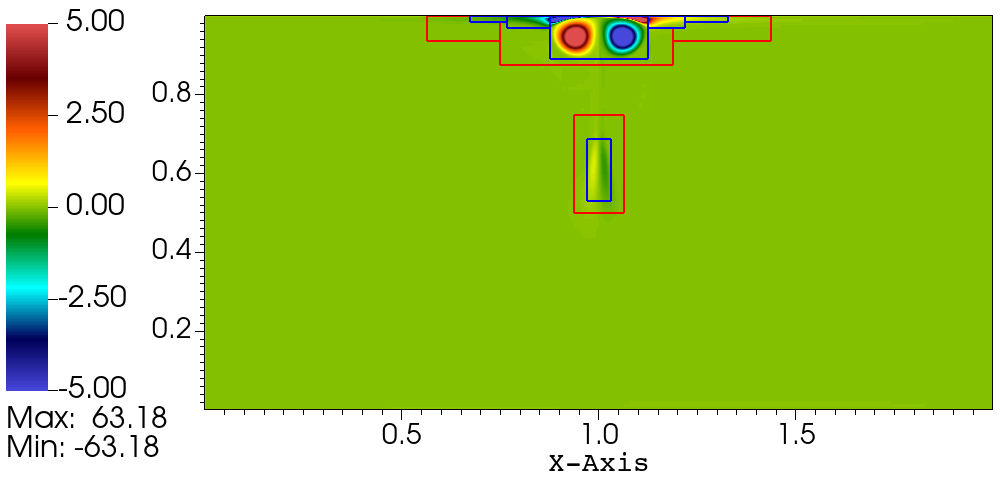}
  }
  \subfigure[$t=15$]
  {
    \includegraphics[width=6cm, height=2.7cm, keepaspectratio=false]{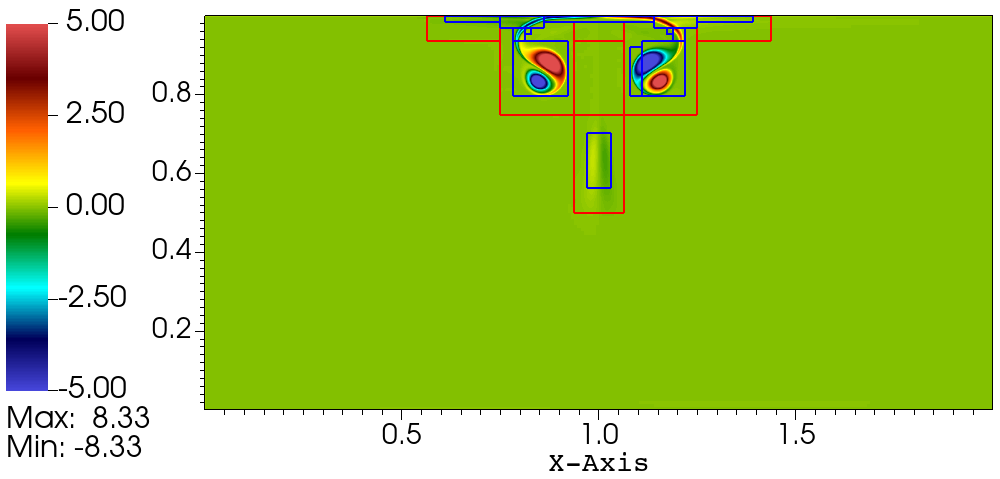}
  }
  \subfigure[$t=20$]
  {
    \includegraphics[width=6cm, height=2.7cm, keepaspectratio=false]{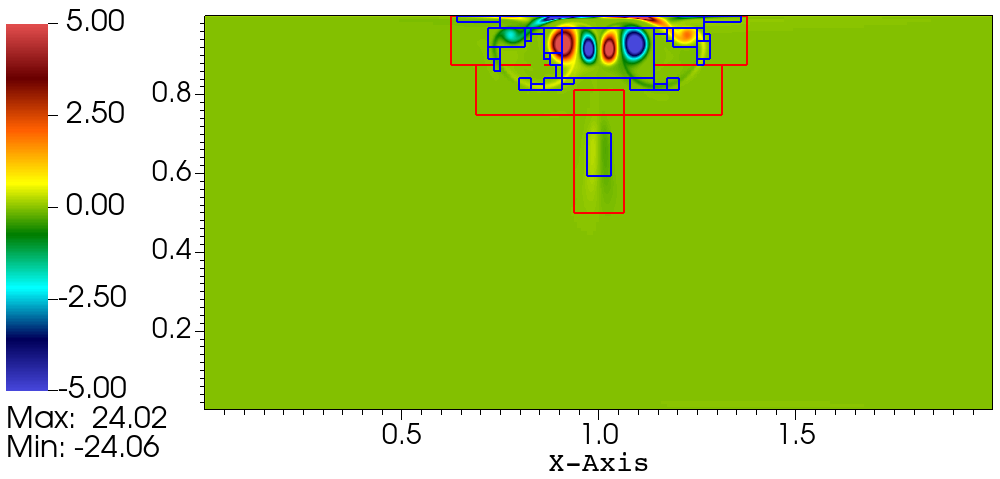}
  }
  \caption{Vorticity snapshots of the dipole vortex-wall collision
    test on a three-level AMR hierarchy with $h^0 = \frac{1}{128}$, 
    $r=4$, and $\mathrm{Cr}=0.5$.
    Red boxes and blue boxes respectively represent $\Omega^1$ and
    $\Omega^2$, which are generated by the refinement criteria
    $|\nabla \times \mathbf{u} |\geq 0.05$
    and 
    $|\nabla \times \mathbf{u} |\geq 0.2$, respectively.
  }
  \label{fig:Dipole-vorticitySnapshots}
\end{figure}

\subsection{Dipole vortex-wall collision with Re=42,000}
\label{sec:dipole-vortex-wall}

The rectangular domain $\Omega = [0,2]\times[0,1]$
 of this test has periodic boundaries at $x=0,2$
 and no-slip walls at $y=0,1$.
Via the same process introduced in \Cref{sec:four-way-vortex}, 
 we deduce the initial velocity 
 from the vorticity 
 \begin{equation}
   \label{eq:dipoleVortex}
   \omega(x,y) = - \omega_e x \exp\left[-\frac{(x-x_0)^2 + (y-y_0)^2}{\sigma^2}\right],
 \end{equation}
 where $\omega_e=600$ is the vortex strength,
 $(x_0,y_0)=(1,0.5)$ the center of the dipolar vorticity,
 and $\sigma = 0.0375$ the size of the vortical structure.
These values lead to the maximum velocity at about $0.21$, 
 which, together with $\nu^*\!=\!5\!\times\!10^{-6}$,
 gives $\mathrm{Re} = 42,000$. 

Cell averages of the initial velocity is advanced
 from $t_0=0$ to $t_e=20$
 by the proposed method on a three-level AMR hierarchy with $r=4$
 and dynamic regridding,
 with the vorticity field at key instances
 plotted in \Cref{fig:Dipole-vorticitySnapshots}.
The dipolar vortices move upwards,
 collide with the top boundary at $y=1$,
 form a viscous boundary layer,
 rebound from the wall, 
 and roll up into secondary vortices.
These prominent features are captured
 by dynamically generated grids shown in \Cref{fig:Dipole-vorticitySnapshots}; 
 our results agree well with those in \cite{kramer2007vorticity,van2022fourth}.
As shown in Figure  \ref{fig:mg_convergenceRate}(d-f),
 the performance of the geometric multigrid method
 for solving this test
 is very similar to that
 for the single-vortex test in Figure
 \ref{fig:mg_convergenceRate}(a-c),
 once again confirming the efficiency of the geometric multigrid method
 in \Cref{subsec:line-solv-irreg}.

\subsection{Efficiency evaluation}
 \label{subsec:alg:effic-eval}

The superior efficiency of AMR to uniform single-level grids
 is usually demonstrated by the large ratio
 of the CPU time consumed on a uniform single-level grid 
 to that on an AMR hierarchy
 for completing the \emph{entire} test. 
In this work, we delve deeper into
 the temporal variation of locally refined regions
 by examining two \emph{time-dependent} ratios that measure
 the ideal and actual speedup of AMR over uniform grids.
We start with two assumptions as follows.
 
\begin{enumerate}[({EVA}-1)]
\item The CPU time for solving linear systems dominates
  that of all other modules 
  such as regridding, evaluating discrete operators, 
  and enforcing coarse-fine interface conditions and physical boundary conditions. 
\item The complexity of solving linear systems is \emph{optimal},
  i.e., the corresponding CPU time is linearly proportional to the number of unknowns.
\end{enumerate}
Results of extensive numerical tests of the proposed method
 confirm (EVA-2).

It follows that,
 to advance the numerical solution from $t^n$ to $t^{n+1}=t^n+k^0$, 
 the CPU times consumed by subcycled AMR 
 and by a single-level grid with uniform spacing $h^{\ell_{\max}}$
 are, respectively, 
 \begin{displaymath}
   R^n_{\mathrm{amr}}
   :=
   \sum^{\ell_{\max}}_{\ell=0}\frac{k^0}{k^{\ell}}
   \Theta(N_{\ell})
   \quad\mathrm{and}\quad
   R^n_{\mathrm{unf}} :=
   \frac{k^0}{k^{\ell_{\max}}}\Theta(N_{\mathrm{unf}}),
 \end{displaymath}
 where $N_{\ell}$ is the number of cells in $\Omega^{\ell}$, 
 $N_{\mathrm{unf}}$ the number of cells in the single-level grid,
 $\Theta(N_{\ell})$ the optimal complexity of subcycled AMR 
 to march one time step with size $k^\ell$,
 and $\Theta(N_{\mathrm{unf}})$ that of the single-level grid
 to march one time step with size $k^{\ell_{\max}}$.

 The \emph{ideal speedup}
 of the $n$th time step is defined as
 \begin{equation}
   \label{eq:ideal_speedup}
   S^n_{\mathrm{idl}} := \frac{R^n_{\mathrm{unf}}}{R^n_{\mathrm{amr}}}
   \approx \frac{1}{\sum^{\ell_{\max}}_{\ell=0}
     \frac{k^{\ell_{\max}}}{k^{\ell}}\frac{N_\ell}{N_{\mathrm{unf}}}},
 \end{equation}
 where the last step follows from the assumption
 that $\Theta(N_{\ell})$ and
 $\Theta(N_{\mathrm{unf}})$ have roughly the same constant.
The value of $S^n_{\mathrm{idl}}$ is completely determined
 by the regridding steps (RDM-1,2,3,4) in \Cref{subsec:regridding}. 

The \emph{actual speedup}s of one time step
 and of the entire simulation
 are respectively 
 \begin{equation}
   \label{eq:actualSpeedups}
   S^n_{\mathrm{act}} :=
   \frac{T^n_{\mathrm{unf}}}{T^n_{\mathrm{amr}}};
   \quad
   S_{\mathrm{act}} :=
   \frac{\sum_n T^n_{\mathrm{unf}}}{\sum_n T^n_{\mathrm{amr}}}, 
 \end{equation}
 where $T^n_{\mathrm{amr}}$ and $T^n_{\mathrm{unf}}$ denote
 the CPU time consumed at the $n$th time step
 on the AMR hierarchy and the uniform grid, respectively.
Different from $S^n_{\mathrm{idl}}$,
 the actual speedup $S^n_{\mathrm{act}}$ is affected by
 all algorithmic and implementational aspects of subcycled AMR.
The ratio $\frac{S^n_{\mathrm{act}}}{S^n_{\mathrm{idl}}}$ being close
 to one indicates that \mbox{(EVA-1)} is a good assumption for this time step.
$S_{\mathrm{act}}$ is the common ratio
 for measuring the efficiency of AMR mentioned
 in the opening paragraph of this subsection.

\begin{figure}
  \centering
  \subfigure[$S_{\mathrm{act}}\approx 9$ for the single-vortex test]
  {  \label{fig:single-vortex-efficiency-curve}  
    \includegraphics[scale=0.295]{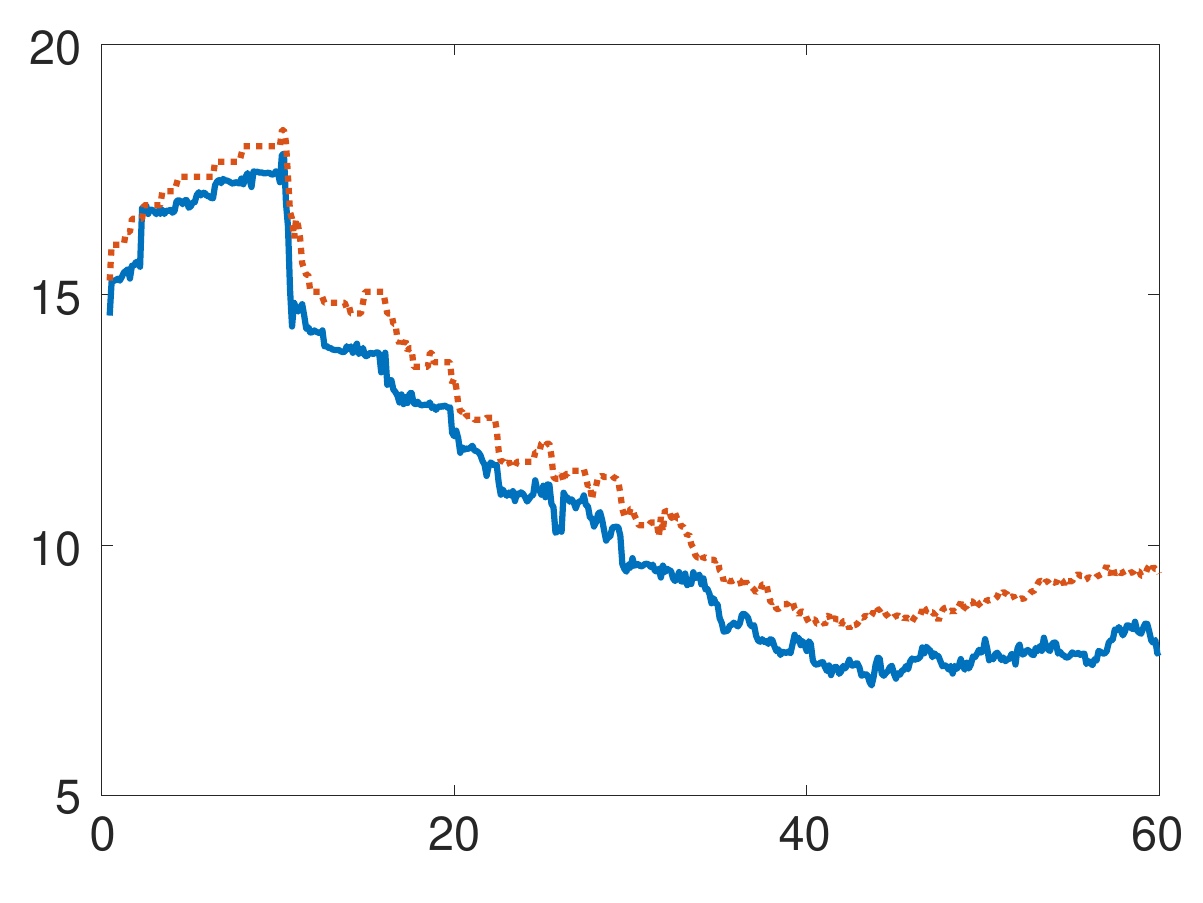}
  }
  \hfill
  \subfigure[$S_{\mathrm{act}}\approx 29$ for the dipole vortex-wall collision]
  {  \label{fig:Dipole-efficiency-curve}
    \includegraphics[scale=0.295]{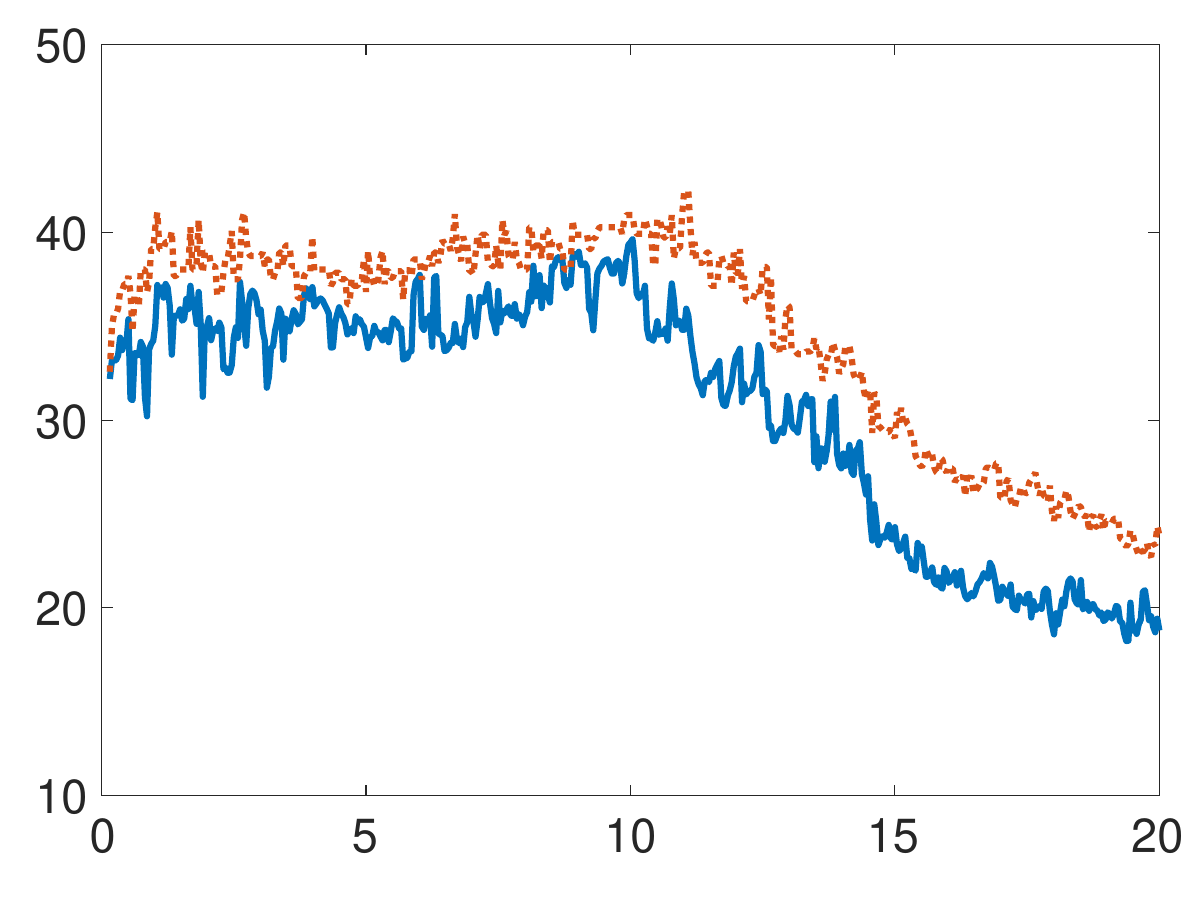}
  }
  \caption{Temporal evolutions of
    the ideal speedup  $S^n_{\mathrm{idl}}$ (orange dotted lines)
    and the actual speedup $S^n_{\mathrm{act}}$ (blue solid lines)
    for the tests in \Cref{sec:single-vortex-test}
    and \Cref{sec:dipole-vortex-wall},
    where the uniform spacing of the single-level grid
    is $h^{\ell_{\max}}$, i.e., 
    the grid size of the finest level of subcycled AMR.
    The horizontal and vertical axes represent time
    and speedup ratio of subcycled AMR, respectively.
    The maximum number of cells within the entire AMR hierarchy
    is much smaller than 
    the number of cells on the single-level grid
    by a factor of 11 and 30 for (a) and (b), respectively.
  }
  \label{fig:speedupEvolutions}    
\end{figure}

In \Cref{fig:speedupEvolutions},  we present
 temporal evolutions of the speedup ratios
 for the two tests in Subsections \ref{sec:single-vortex-test}
 and \ref{sec:dipole-vortex-wall}.
Both subplots suggest the following.

\begin{itemize}
\item Values of the actual speedup $S^n_{\mathrm{act}}$
 are always close to those of the ideal speedup $S^n_{\mathrm{idl}}$,
 confirming (EVA-1). 
\item Both evolutions can be roughly divided into two stages by a key instant $t_*$: 
 $S^n_{\mathrm{idl}}$ remains more or less constant during $[t_0, t_*]$
 and mostly decreases in time during $[t_*,t_e]$, 
 due to the fact that regions of high vorticity
 are concentrated in a small region during $[t_0, t_*]$
 and become more spread out during $[t_*,t_e]$. 
For both \Cref{fig:single-vortex-efficiency-curve}
 and \Cref{fig:Dipole-efficiency-curve},
 we have $t_*\approx 10$. 
\item The curve of $S^n_{\mathrm{act}}$
  is closer to that of $S^n_{\mathrm{idl}}$
  in the first stage than in the second stage,
  because the more concentrated regions of high vorticity
  lead to a simpler topology and geometry of the refinement levels, 
  which further imply a smaller percentage of the overhead 
  in managing the coarse-fine interface.
\end{itemize}

Finally, we note in passing that
 values of speedup ratios in
 (\ref{eq:ideal_speedup}) and (\ref{eq:actualSpeedups})
 depend largely 
 on the problem at hand, the user-specified refinement criteria,
 and the number of refinement levels.



\section{Conclusion}
\label{sec:conclusion}

We have developed a fourth-order adaptive projection method
 for solving the INSE with subcycling in time.
To enforce the divergence-free constraint, 
 we adapt the GePUP-E formulation \cite{Li2025GePUP-E}
 of INSE in the context of AMR
 and derive coarse-fine interface conditions
 so that the velocity divergence decays exponentially
 on the subdomain of any refinement level. 
For subcycling in time, 
 we recursively advance the velocity on a single level
 and its finer levels, 
 with the interface conditions approximated 
 via spatiotemporal interpolations 
 and with the solvability conditions of elliptic equations
 satisfied to machine precision
 for each connected component of the subdomain.
Within each time step,
 the algorithm mainly consists
 of solving a sequence of linear systems by geometric multigrid, 
 leading to an optimal complexity of the proposed method.
By adopting implicit-explicit schemes for time integration, 
 the proposed method is also applicable
 to a wide range of Reynolds numbers.
Results of numerical tests
 confirm the fourth-order accuracy of velocity in the $L_{\infty}$ norm
 and the third-order accuracy of pressure in the $L_2$ norm.
The superior efficiency of our subcycled AMR method
 is also demonstrated by temporal variations of two speedup ratios 
 over uniform grids.
Not restricted to adaptive grids made by rectangular patches,
 the main components of the proposed AMR methods
 can also be transplanted to adaptive grids based on quadtrees or
 octrees.

The next step along this research line
 is to augment the subcycled AMR 
 to solve INSE on irregular domains
 via poised lattice generation \cite{zhang2024PLG}.
The extension of our method to three dimensions 
 should be theoretically straightforward, 
 but may involve practical difficulties
 of designing efficient multigrid solvers,
 especially on irregular domains.

Finally,
 we plan to couple the proposed AMR method
 with our fourth-order interface tracking methods
 \cite{hu24:_arms,TaQi25}
 to simulate incompressible viscous fluids
 with moving boundaries. 



\section*{Acknowledgments}

We thanks two anonymous referees 
  for their comments and suggestions
  that lead to an improvement on the exposition.
  We also acknowledge one helpful suggestion from Jiatu Yan, 
  a graduate student in our team
  at the School of Mathematical Sciences, Zhejiang University.

\bibliographystyle{siamplain}
\bibliography{GePUP-AMR.bib}
\end{document}